\theoremstyle{plain}
\newtheorem{thm}[subsection]{Theorem}
\newtheorem{lemma}[subsection]{Lemma}
\newtheorem{prop}[subsection]{Proposition}
\theoremstyle{definition}
\newtheorem{rmk}[subsection]{Remark}
\newtheorem{defn}[subsection]{Definition}
\newtheorem{example}[subsection]{Example}
\let\a\alpha
\let\l\lambda
\let\m\mu
\let\p\psi
\let\o\omega
\let\r\rho
\let\s\sigma
\let\th\theta
\def\scr{\mathscr}
\def\cal{\mathcal}
\let\G\Gamma
\let\O\Omega
\newcommand{\ra}{\longrightarrow}
\newcommand{\stab}{\operatorname{stab}}
\newcommand{\im}{{\rm im}\:}
\newcommand{\DD}{{\mathbb D}}
\newcommand{\GG}{{\mathbb G}}
\newcommand{\F}{{\mathbb F}}
\newcommand{\Z}{{\mathbb Z}}
\newcommand{\C}{{\mathbb C}}
\newcommand{\Q}{{\mathbb Q}}
\newcommand{\R}{{\mathbb R}}
\newcommand{\cA}{\mathcal{A}}
\newcommand{\cC}{\mathcal{C}}
\newcommand{\cF}{\mathcal{F}}
\newcommand{\cG}{\mathcal{G}}
\newcommand{\cE}{\mathcal{E}}
\newcommand{\cL}{{\mathcal{L}}}
\newcommand{\cO}{\mathcal{O}}
\newcommand{\cT}{\mathcal{T}}
\newcommand{\cV}{\mathcal{V}}
\newcommand{\sL}{{\mathscr L}}
\newcommand{\Spec}{{\mbox{Spec }}}
\newcommand{\cris}{{\mathrm{Cris}}}
\newcommand{\cri}{{\mathrm{cris}}}
\newcommand{\Def}{{\mathrm{Def}}}
\newcommand{\Aut}{{\mathrm{Aut}}}
\newcommand{\et}{{\mathrm{et}}}
\newcommand{\mult}{{\mathrm{mult}}}
\newcommand{\Rep}{{\mathrm{Rep}}}
\newcommand{\iso}{{\mathrm{Isocris}}}
\newcommand{\Lie}{{\mbox{Lie }}}
\newcommand{\Pic}{{\text{Pic }}}
\newcommand{\univ}{{\mathrm{univ}}}
\newcommand{\Hom}{{\mathrm{Hom}}}
\newcommand{\Pp}{{(\s^*-\mathrm{Id})}}
\begin{document}

\title{Tensor decomposition of isocrystals characterizes Mumford curves}
\author{Jie Xia}
\maketitle

\begin{abstract}
We seek an appropriate definition for a Shimura curve of Hodge type in positive characteristics via characterizing curves in positive characteristics which are reduction of Shimura curve over $\C$. In this paper, we study the liftablity of a curve in the moduli space of principally polarized abelian varieties over $k, \text{char } k=p$. We show that in the generic ordinary case, some tensor decomposition of the isocrystal associated to the family imply that this curve can be lifted to a Shimura curve. 
\end{abstract}

\section{Introduction}
\subsection{Motivation and the main result}

Classical Shimura varieties are defined over number fields. As far as I know, there is no definition of Shimura varieties in positive characteristic. The theory of integral models of Shimura varieties has been developed to be the main tool to study Shimura varieties in positive characteristic.  In this paper, we seek an intrinsic definition of Shimura varieties in characteristic $p$ which guarantees that they are reductions of classical Shimura varieties. 

Such a definition is not a problem for Shimura varieties of PEL type. It has been stated clearly in many references, such as \cite{Kott} and \cite{Milne}. Roughly, Shimura varieties of PEL type have a natural moduli interpretation: polarized abelian varieties with certain endomorphism algebras. Since the endomorphism algebra of abelian varieties can be extended to mixed characteristic, we can state the moduli problem over a field of positive characteristic as well and take the solution to be the PEL type Shimura varieties in characteristic $p$. 

However, for Shimura varieties of Hodge type, we can not use the same strategy. Though it also has a natural moduli interpretation, without assuming Hodge conjecture, the Hodge cycles involved in the moduli problem may not be algebraic and they have no natural translation to positive characteristic. Via the work of Kisin \cite{Kisin}, we are able to define the integral canonical model of Shimura varieties of Hodge type and also a universal family over the integral model. So a natural candidate for the definition should be the reduction of an integral canonical model at a prime $\frak{p}$. The disadvantage is that this definition is not intrinsic in characteristic $p$. Our goal is to find a characterization in positive characteristic of such a reduction which can serve as an intrinsic definition. We mainly focus on a remarkable example constructed by Mumford.

Mumford gives a countably many moduli functors of abelian fourfolds in \cite{Mum}, whose Shimura datum is given via the corestriction of an quaternion algebra over a cubic totally real field. In \cite{Zuo}, Viehweg and Zuo generalize the construction. One can consider a quaternion division algebra $D$ defined over any totally real number field $F$ of degree $m+1$ over $\Q$, which is ramified at all infinite places except one and $\text{Cor}_{F/\Q}(D)=M_{2^{m+1}}(\Q)$. Then we obtain the following types of moduli functors of abelian varieties with special Mumford-Tate group 
\[Q=\{x\in D^*| x\bar x =1\}.\]
With suitable level structures, the resultant Shimura curves $M$ is proper and smooth which we call them Mumford curves. Since the level structure is not fixed, Mumford curves stand for a class of possible base curves, two of which have a common finite etale covering.

Via Theorem 0.5 in \cite{Zuo} and Theorem 0.9 in \cite{Moller}, the Mumford curves play an important role among Shimura curves of Hodge type.

Our notations and main theorem are as following: 

Let $k$ be an algebraically closed field of characteristic $p$, $C$ be a proper smooth curve over $k$ with absolute Frobenius $\sigma$ and $\pi: X \ra C$ be a nonisotrivial family of the $2^m$ dimensional principally polarized abelian varieties over $C$.

\begin{defn}

\begin{enumerate} [(a)]
\item A curve in $\cA_{2^m,1,n} \otimes \C$ is called a \textit{special Mumford curve} if it is the image of a Mumford curve in $\cA_{2^m,1,n}\otimes \C$ induced by a universal family. 

\item The family $X \ra C$ is a \textit{weak Mumford curve over $k$} if the image of $C \ra \cA_{2^m,1,n}$ (induced by the family $X/C$) is (possibly an irreducible component of) a reduction of a special Mumford curve in $\cA_{2^m,1, n}\otimes \C$ .

\end{enumerate}
\end{defn}

Let $\cE$ be the Dieudonne crystal $R^1\pi_{\cri,*}(\cO_X)$ in $\cris(C/W(k))$ with Frobenius $F$ and Verschiebung $V$. 

\begin{thm} \label{main thm}
Assume $p>2$ and 
\begin{enumerate}
\item $X_c$ is ordinary for some closed point $c\in C$,
\item $\cE$ is irreducible and isomorphic to  $\cV_1 \otimes \cV_2 \otimes \cdots \cV_{m+1}$ as isocrystals,
\end{enumerate}
then $X \ra C$ is a weak Mumford curve over $k$.

If we further assume the Higgs field associated to $X \ra C$ is maximal,
then there exists another family of polarized abelian varieties $Y \ra C$ such that 
\begin{enumerate}[(a)]
\item there exists an isogeny $Y \ra X$ over $C$, 
\item $Y \ra C$ is a good reduction of a Mumford curve. 
\end{enumerate}

\end{thm}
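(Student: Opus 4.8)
The plan is to use the maximal Higgs field to promote the isogeny-level (isocrystal) decomposition provided by hypothesis (2) into a decomposition of the actual geometric data, and then to reconstruct $Y$ through Mumford's corestriction construction over a lift of $C$. I will take the first part of the theorem as established, so that the image of $C \ra \cA_{2^m,1,n}$ is already known to be a reduction of a special Mumford curve; the remaining task is to rigidify this into an honest family $Y \ra C$ isogenous to $X$.

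First I would analyze the Higgs field through the tensor decomposition. Writing $\theta$ for the graded Kodaira--Spencer map attached to $\cE$, the isomorphism $\cE \cong \cV_1 \otimes \cdots \otimes \cV_{m+1}$ together with the Leibniz rule expresses $\theta$ as the sum of the Higgs fields of the factors $\cV_i$. Since each $\cV_i$ has rank $2$ and $\theta$ is an isomorphism (maximality), a rank-and-weight count forces the Hodge filtration to be compatible with the tensor factorization and singles out exactly one factor, say $\cV_{m+1}$, whose Higgs field is maximal (a rank-$2$ uniformizing piece of Hodge type $(1,0)+(0,1)$), while the remaining $\cV_1,\dots,\cV_m$ have vanishing Higgs field and are therefore unitary, hence isotrivial isocrystals. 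This step is essentially bookkeeping: irreducibility from hypothesis (2) is what excludes mixing between factors and forbids more than one factor from carrying a nonzero Higgs field.

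Next I would lift the pieces. At the ordinary point $c$ the $p$-divisible group of $X_c$ splits, so Serre--Tate theory supplies canonical coordinates, and a rank-$2$ factor with maximal Higgs field is precisely the crystalline avatar of the universal family on a quaternionic Shimura curve; thus $\cV_{m+1}$ lifts, together with a lift $\tilde{C}$ of $C$ over $W(k)$, to the $H^1$ of a family of fake elliptic curves. In parallel, the unitary factors $\cV_1,\dots,\cV_m$, being isotrivial, are built from CM data and lift to constant unitary pieces over $\tilde{C}$. I would then reconstruct $Y$: Mumford's corestriction, with $\text{Cor}_{F/\Q}(D) = M_{2^{m+1}}(\Q)$ and Mumford--Tate group $Q$, realizes the tensor product of the lifted factors as the first cohomology of a polarized abelian scheme $\tilde{Y} \ra \tilde{C}$, the polarization arising from the symplectic form on the standard representation; reducing modulo $p$ yields $Y \ra C$ over $k$ and proves (b).

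Finally, for (a), the isocrystal of $Y$ is isomorphic to $\cV_1 \otimes \cdots \otimes \cV_{m+1} \cong \cE$ by construction, so $Y$ and $X$ have isomorphic rational Dieudonn\'e crystals. Over the ordinary locus, Serre--Tate theory and the rational full faithfulness of the crystalline Dieudonn\'e functor produce a quasi-isogeny $Y \ra X$ near $c$; properness and normality of $C$, together with the fact that homomorphisms of abelian schemes are detected on $H^1$, let me extend it across the finitely many non-ordinary points, and after clearing denominators and matching polarizations up to scalar I obtain a genuine isogeny $Y \ra X$. I expect the main obstacle to be precisely this last globalization combined with the integral refinement in the reconstruction: the decomposition in hypothesis (2) is only given on isocrystals, an isogeny-level datum, so realizing it by an honest abelian scheme with the correct lattice, polarization, and Mumford--Tate group --- and then extending the isogeny over all of $C$ --- is exactly where the maximal Higgs rigidity must be used in full, and where the argument is most delicate.
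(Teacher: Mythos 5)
Your proposal has several genuine gaps, and the most serious one is exactly at the step you dismiss as ``essentially bookkeeping.'' Hypothesis (2) gives a decomposition of $\cE$ only as \emph{isocrystals}: the factors $\cV_i$ carry no Frobenius, no Hodge filtration, and no Higgs field a priori, so you cannot ``write $\theta$ as the sum of the Higgs fields of the factors'' at the outset. The paper must first use irreducibility and a Tannakian argument to decompose the Frobenius itself as $F=\otimes\phi_i$ with $\phi_i:\cV_i^\s\ra\cV_{s(i)}\otimes\cL_i$ for some permutation $s$, then use the ordinary point $c$ to compute the generic Newton slopes of each factor ($\{0,f\}$ for one, $\{0,0\}$ for the rest), prove $s$ fixes the slope-$\{0,f\}$ factor, and kill the line-bundle twists using surjectivity of $\s^*-\mathrm{Id}$ on $\Pic$ --- only then does $\cV_1$ become a Dieudonn\'e crystal with a Hodge filtration and the rest a unit-root crystal $\cT$. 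Relatedly, your claim that the zero-Higgs factors are ``unitary, hence isotrivial'' and ``lift to constant unitary pieces'' is false: a unit-root crystal corresponds to an \'etale BT group, i.e.\ a generally nontrivial $p$-adic representation of $\pi_1(C)$, not a constant piece.

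The second fatal gap is the transfer of maximality. The decomposition $\cE\cong\cV_1\otimes\cT$ is realized only by an isogeny $\psi$ of crystals (equivalently an isogeny $f:Y\ra X$ of abelian schemes via de Jong's theorem), and maximality of the Higgs field of $\cE$ does \emph{not} formally pass across this isogeny: the Higgs field $\theta_1$ of $\cV_1$ can even be zero, in which case $\cV_1$ is a Frobenius pullback and one must un-pull-back finitely many times (controlled by a degree argument on the Hodge line bundle) before $\theta_1$ becomes nonzero; proving it is then \emph{maximal} requires the delicate analysis of $\ker f$ inside $G_n^{\times 2^m}$ (moving the kernel into the first $2^m-1$ factors by coordinate changes on $\m_{p^n}^{\times 2^m}$ so that a copy of $G_n$ injects into $X[p^\infty]$). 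Your proposal silently assumes the conclusion of this entire section. Finally, your reconstruction of $Y$ via Mumford's corestriction over ``a lift $\tilde C$ of $C$'' presupposes the lift of $C$, which in the paper is itself an output of the rank-$2$ generically ordinary factor with nontrivial Kodaira--Spencer (the main theorem of the reference \cite{Xia}), followed by Serre--Tate lifting of $Y$ and Viehweg--Zuo's Arakelov-bound theorem to identify $\tilde Y\ra\tilde C$ as a Mumford curve; the corestriction construction plays no role in the proof and cannot replace these steps.
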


\begin{rmk}
If condition (2) can be strengthened to $\cE \cong \cV_1 \otimes \cV_2 \otimes \cdots \cV_{m+1}$ as  \textit{crystals} for rank 2 irreducible $\cV_i$, then with maximal Higgs field, $X \ra C$ itself is already a Mumford curve over $k$.
 \end{rmk}

\subsection{Structure of the paper}
In Section \ref{Shimura curves of Mumford type} and \ref{The crystals}, we introduce the basic notions involved in \ref{main thm}.

In Section \ref{example}, we show that some good reductions of Mumford curves satisfy all three conditions in \ref{main thm}.

In Section \ref{crystal}, we study the $F$-crystal structure of $\cE$. The irreducibility of all $\cV_i$ implies a tensor decomposition of the Frobenius $F$ of $\cE$. Then each $\cV_i$ has an $F^f$-isocrystal structure for some integer $f$. Since $C$ intersects with the ordinary locus, the Newton slopes of $\cV_i$ are $\{0,f\},\{0,0\}, \cdots, \{0,0\}$, respectively. Assume $\cV_1$ is the ordinary one, and then we show that $F$ descends to $\cV_1$. So $\cV_1$ is actually a generically ordinary $F$-crystal. And the tensor $\cV_2 \otimes \cV_3 \otimes \cdots \cV_{m+1}$ is a unit root crystal. Then the isomorphism in (2) of \ref{main thm} is an isogeny between Dieudonne crystals: 
\[\psi: \cE \ra \cV_1 \otimes (\cV_2 \otimes \cdots \cV_{m+1}).\]

By \ref{equiv on curve}, the isogeny $\psi$ induces an isogeny between abelian schemes over $C$: 
\[f: Y \ra X\]
where $\cV_1 \otimes (\cV_2 \otimes\cdots \otimes \cV_m)$ is the Dieudonne crystal associated to $Y$. Let $G$ be the BT group corresponding to $\cV_1$.

In Section \ref{Higgs}, we prove the second part of \ref{main thm}, i.e. the case with maximal Higgs field. By a theorem in \cite{Xia} and changing of coordinates, we can show $Y\ra C$ has the maximal Higgs field. Then by \ref{the last one}, $Y \ra C$ is a good reduction of a Mumford curve. 

In Section \ref{without maximal Higgs field}, we prove the first part, i.e. the non-maximal Higgs field case. The family $Y \ra C$ induces a morphism $\phi: C \ra \cA_{2^m, d, n}\otimes k$. Then Theorem \ref{image with maximal Higgs field} shows that $\im \phi$ is a curve over which the universal family has the maximal Higgs field. Thus by \ref{the last one} $\im \phi$ is a reduction of a special curve in $\cA_{2^m, d, n} \otimes \C$. Then the first part of 
\ref{main thm} follows from \ref{lifting of isogeny}.  

\subsection*{Acknowledgment} I would like to express my deep gratitude to my advisor A.J.\,de Jong, for
suggesting this project to me and for his patience and guidance throughout its resolution. This
paper would not exist without many inspiring discussions with him.

\section{Shimura curves of Mumford type} \label{Shimura curves of Mumford type}

In this section, we briefly state the definition of Shimura curves of Mumford type. 

Let $\mathbb{S}$ be the Weil restriction of scalar $\mathrm{Res}_{\C/\R}$. A Shimura datum $(G, X)$ consists of a reductive group $G$ defined over $\Q$ and a $G(\R)-$conjugate class $X$ of a cocharacter $h: \mathbb{S} \ra G_\R$ such that 
\begin{enumerate}
\item For any $h$ in $X$, the Lie algebra $\frak{g}$ of $G_\R$, viewed as a conjugation representation of $\mathbb{S}$ via $h$, has the type $(1,-1), (0,0)$ and $(-1,1)$.
\item The adjoint action of $h(i)$ induces a Cartan involution on the adjoint group of $G_\R$.
\item The adjoint group of $G_\R$ does not have a factor $H$ defined over $\Q$ such that the projection of $h$ on $H$ is trivial.
\end{enumerate}

For any sufficiently small compact open subgroup $K$ of $G(\mathbb{A}_f)$, 
\[Sh_K(G, X)=G(\Q)\backslash X \times G(\mathbb{A}_f) /K\] is a complex algebraic variety. Take the inverse limit over $K$ and the resulting inverse system is called a \textit{Shimura variety}.

A large class of Shimura varieties admits an interpretation in terms of moduli of certain polarized abelian varieties. For instance, \textit{Shimura varieties of PEL type} parametrize polarized abelian varieties with a prescribed endomorphism ring. In such a class, we can take the moduli description as an equivalent definition of Shimura varieties. 

In \cite{famofav}, Mumford defines a class of Shimura varieties  to be the moduli scheme of polarized abelian varieties (up to isogeny) whose Hodge groups are contained in a prescribed Mumford-Tate group. We call this class \textit{Shimura varieties of Hodge type} which contains PEL type.

\subsection{Shimura curve of Mumford type over $\C$}
Let $K$ be a totally real field of degree $m+1$ and $D$ be a quaternion division algebra over $K$ which splits only at one real place and $\rm{Cor}_{K/\Q}(D)=M_{2^{m+1}}(\Q)$. In this case $D\otimes_\Q \R \cong \mathbb{H} \times \cdots \times \mathbb{H} \times M_2(\R)$.

Let $\bar \ $ be the standard involution of $D$, and let 
\[Q=\{x\in D^* | x\bar x=1\}. \]
Then $Q$ is a simple algebraic group over $\Q$ which is the $\Q-$form of the real algebraic group 
\[SU(2)^{\times m} \times SL(2,\R).\] 
Since $\rm{Cor}_{K/\Q}(D)=M_{2^{m+1}}(\Q)$, $Q$ admits a natural $2^{m+1}$ dimensional rational representation $V$ whose real form is 
\[\r: SU(2)^{\times m} \times SL(2) \ra SO(2^m)\times SL(2) \text{ acting on }  \R^{2^{m+1}}. \]
Note $Q_\C=SL(2,\C)^{ \times m+1}$. Then $V_\C$ is the tensor of $m+1$ copies of standard representation $\C^2$ of $SL(2,\C)$: 
\begin{equation} \label{the tensor decomposition of V}
V_C=\C^2 \otimes \C^2 \cdots \otimes \C^2\ \ \ (m+1 \text{ factors})
\end{equation} 

Let \begin{align*}
h: \mathbb{S}_{m}(\R) \ra& Q(\R)\\ e^{i\th} \mapsto&I_{2^m} \otimes \begin{pmatrix} \cos \th & \sin \th \\-\sin \th & \cos \th \\ \end{pmatrix}.
\end{align*} 
Then $(Q,h)$ defines a Shimura datum. So is $(\r(Q), \r \circ h)$. Generically $\r(Q)$ is the Hodge group of $V$. 

Let $\stab(h)\subset Q_\R$ be the stabilizer of $h$. Then $\stab(h)$ is a maximal compact subgroup of $Q_\R$ and hence congruent to $SO(2) \times SU(2)^{\times m}$. So $Q_\R/\stab(h)\cong Sp(1,\R)/SO(2, \R)\cong \frak{h}$ the upper half plane. Since $\ker \r\subset \stab(h)$, we have 
\[\r(Q)/\stab(\r\circ h)=Q_\R/\stab(h)=\frak{h}.\]
Let $\G \subset Q_\R$ be an arithmetic subgroup such that $\G$ acts freely and properly discontinuous on $\frak{h}$. Note $\ker \r \subset Z(Q)$ and then it fixes $h$, $\G\hookrightarrow \r(Q(\R)).$ 

We call the Shimura curves corresponding to the datum $(Q, h)$ \textit{the Shimura curves of Mumford type} (or \textit{Mumford curves}, for simplicity). If we don't specify level structures, Mumford curves mean a family of Shimura curves, any morphism between two of which is finite and etale.

By \cite{Zuo}, we know any family of polarized abelian varieties over a smooth proper curve with maximal Higgs field is a Mumford curve, up to taking powers and isogeny. Combining with following theorem, we know that in order to define any Shimura curves of Hodge type in positive characteristic, it is necessary to define Mumford curve in char $p$. 

\begin{thm}(\cite[0.9]{Moller}) \label{Moller}
Any smooth Shimura curve of Hodge type with the universal family has the maximal Higgs field.
\end{thm}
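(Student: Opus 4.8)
The plan is to reduce the computation of the Higgs field to the tautological rank-two system coming from the uniformizing $SL(2)$-factor, where it can be read off directly. First I would fix the definition: writing the logarithmic Higgs bundle of the weight-one variation $\mathbb{V}$ carried by the universal family as $(E^{1,0}\oplus E^{0,1},\theta)$ with $\theta\colon E^{1,0}\to E^{0,1}\otimes\Omega^1_C(\log S)$ (where $S$ is the cusp divisor, empty in the proper case relevant to Mumford curves), maximality of $\theta$ means that it restricts to an isomorphism $L\xrightarrow{\sim}L^{-1}\otimes\Omega^1_C(\log S)$ on a rank-two sub-Higgs-bundle; equivalently, there is a line subbundle $L\subset E^{1,0}$ with $L^{\otimes 2}\cong\Omega^1_C(\log S)$ over which $\theta$ is injective. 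The whole strategy is to produce this $L$ from the uniformization.

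Since $C$ is a Shimura curve of Hodge type, $C=\Gamma\backslash\mathfrak{h}$ with $\mathfrak{h}$ the one-dimensional Hermitian symmetric domain attached to the datum $(G,X)$, and $\mathbb{V}$ is the pullback, along the equivariant period map $\mathfrak{h}\to X$, of the $G$-representation defining the family. The key structural input is condition (1) of the Shimura datum: the Lie algebra $\mathfrak{g}=\Lie G_\R$, as a Hodge structure via $h$, has only the types $(1,-1),(0,0),(-1,1)$. This says precisely that the holomorphic tangent space $\mathfrak{p}^+$ of $X$ is one-dimensional (the $(-1,1)$-part) and that the period map is horizontal for Griffiths transversality and an open immersion. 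I would use this to identify the derivative of the period map with the Higgs field $\theta$, so that $\theta$ is exactly the action of the single generator of $\mathfrak{p}^+\cong T\mathfrak{h}$, and to see that it is nonzero (condition (3), no trivial $\Q$-factor, rules out the period map being constant on a factor).

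Next I would extract the uniformizing sub-system. Over $\C$ the representation restricts along the embedded $SL(2)$ and decomposes into isotypic pieces $\mathbb{V}_\C\cong\bigoplus_j(\mathrm{Sym}^{a_j}\mathbb{L})\otimes U_j$, where $\mathbb{L}$ is the standard rank-two system of the $SL(2)$-factor (the tautological weight-one VHS on $\mathfrak{h}$) and the $U_j$ are unitary local systems on which $SL(2)$ acts trivially, hence carry the zero Higgs field. For a weight-one variation only $a_j\in\{0,1\}$ occur, and the summand with $a_j=1$ is $\mathbb{L}\otimes U$. The Higgs field of $\mathbb{V}$ on this summand is $\theta_{\mathbb{L}}\otimes\mathrm{id}_U$, and $\theta_{\mathbb{L}}$ is the Higgs field of the standard weight-one VHS on the upper half plane, the same one carried by the universal elliptic curve over the modular curve; its Kodaira--Spencer map is the classical isomorphism $E^{1,0}_{\mathbb{L}}\xrightarrow{\sim}E^{0,1}_{\mathbb{L}}\otimes\Omega^1_C(\log S)$ with $(E^{1,0}_{\mathbb{L}})^{\otimes 2}\cong\Omega^1_C(\log S)$. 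Tensoring with the flat $U$ preserves this isomorphism, so taking $L=E^{1,0}_{\mathbb{L}}$ exhibits $\mathbb{V}$ as having maximal Higgs field.

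The main obstacle is descending this $\C$-linear picture to the genuine $\Q$- (or $\Z$-) variation and controlling the cusps. The factors $U_j$ and the decomposition are a priori defined only after complexification, so I would need condition (1) on $\mathfrak{g}$ together with the polarization to guarantee that the rank-two maximal piece $\mathbb{L}\otimes U$ is a genuine sub-Higgs-bundle with $L$ of degree $\tfrac12\deg\Omega^1_C(\log S)$, and to identify the correct parabolic/residue data at $S$ so that the Arakelov equality $2\deg L=\deg\Omega^1_C(\log S)$ holds on the nose rather than only up to a boundary contribution. Showing $\theta_{\mathbb{L}}\neq 0$, i.e. that the period map is truly immersive, is exactly the point where conditions (1) and (3) of the Shimura datum enter, and getting the logarithmic extension right across the cusps is the part I expect to require the most care.
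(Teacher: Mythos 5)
The paper does not actually prove this statement: it is imported verbatim from M\"oller (Theorem 0.9 of \cite{Moller}) and used as a black box, in Section 4 to deduce condition (3) of the main theorem for reductions of Mumford curves and again in Section 6. So there is no internal proof to compare yours against. Judged on its own, your sketch follows the standard route of the Viehweg--Zuo/M\"oller circle of ideas: uniformize, restrict the weight-one variation to the unique noncompact $SL(2)$-factor, decompose $\mathbb{V}_\C\cong\bigoplus_j \mathrm{Sym}^{a_j}\mathbb{L}\otimes U_j$ with $a_j\in\{0,1\}$, and observe that on the $a_j=1$ part the Higgs field is $\theta_{\mathbb{L}}\otimes\mathrm{id}$, where $\theta_{\mathbb{L}}$ is the classical Kodaira--Spencer isomorphism of the uniformizing rank-two system. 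That computation is correct, and the items you flag as deferred (descent of the isotypic decomposition from $\C$, parabolic data at the cusps) are genuine but secondary --- in the proper case relevant to this paper the cusp divisor is empty and they disappear.

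The substantive gap is a mismatch between what your argument delivers and what this paper means by ``maximal Higgs field.'' Here maximality is defined (Section 3) as: $\theta\colon\omega\to\alpha\otimes\Omega^1_C$ is an isomorphism on the \emph{entire} Hodge bundle, and that is how the conclusion is used in Sections 4--6. Your argument produces an isomorphism only on the $a_j=1$ isotypic part $\mathbb{L}\otimes U$; any summand with $a_j=0$ is a unitary weight-one sub-variation whose $(1,0)$-piece lies in the kernel of $\theta$, so $\theta$ is a global isomorphism if and only if no such summand occurs. Ruling out the unitary summands is precisely where the hypothesis ``of Hodge type'' (the generic Mumford--Tate group being the full group of the datum, not merely axiom (3) guaranteeing a nonconstant period map) has to do real work, and your sketch does not address it. If you intend the weaker Viehweg--Zuo convention (maximality on the complement of a unitary direct factor), your outline is essentially complete modulo the descent issues you already name; but to match the statement as this paper defines and applies it, you must add the argument that the universal family over a Hodge-type Shimura curve carries no unitary direct factor.
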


\section{The crystals associated to a Barsotti-Tate group} \label{The crystals}
 We explain the concepts involved in \ref{main thm} and state some results on crystals, Barsotti-Tate groups and Tannakian categories which we will use later. 
\subsection{}
The curve $C/k$ in \ref{main thm} has a natural crystalline site $\cri(C/W(k))$, induced from $(k, W(k), p)$. The higher direct image $\cE=R^1\pi_{\cri,*}(\cO_X)$ of the abelian scheme $\pi: X\ra C$ is a crystal in locally free sheaves. 

\begin{defn}
A \textit{Dieudonne crystal} over $\cri(C/W(k))$ is a triple $(\cE,F,V)$ where
\begin{enumerate}
\item $\cE$ is a crystal in locally free sheaves,
\item  $F: \cF^\sigma \ra \cF$ and $V: \cF \ra \cF^\sigma$ are homomorphisms between crystals such that $F\circ V=p. \mathrm{Id}_\cF$, $V \circ F=p. \mathrm{Id}_\cF.$
\end{enumerate}
\end{defn}

In particular, $\cE$ is a Dieudonne crystal. 

Choose an arbitrary lifting $\tilde C$ of $C$ to $W(k)$. The category of crystals in locally free sheaves in $\cri(C/W(k))$ is equivalent to the category of vector bundles with an integrable connection on $\tilde C$. In particular, choosing an open affine subset $U\subset C$ and a lifting $\tilde U$ of $U$, we have a lifting of Frobenius $\tilde \s$ over $\tilde U$. 

In the rest of the paper, by crystal, we mean a crystal in locally free sheaves. Therefore an $F$-isocrystal on $\cri(U/W(k))$ corresponds to a triple $(M, \nabla, F)$, a sheaf of module on $\tilde U$ with an integrable connection and Frobenius $F: M^{\tilde \s} \ra M$.

\subsection{} 
A Barsotti-Tate (BT) group $G$ over $C$ is a $p-$divisible, $p-$torsion and the $p-$kernel is a finite locally free group scheme.  Each $p^i-$kernel $G[p^i]$ is a truncated BT group. By \cite{BBM}, the crystalline Dieudonne functor $\DD(G)=\scr{E}xt^1(\underline{G}, \cO_C)$ associates a Dieudonne crystal over $\cri(C/W(k))$ to a BT group.  And $\DD(G[p])=\DD(G)_C$ admits a filtration 
\[0\ra \o_G \ra \DD(G)_C\ra \a_G \ra 0.\] 

In the context of the Theorem \ref{main thm}, $\cE=\DD(X[p^\infty])$ and the filtration on $\cE_C$ is just the Hodge filtration of $X/C$: $\o=\pi_* \O_{X/C}, \a=R^1\pi_*(\cO_X)$. In particular, $\cE_C$ has the Gauss-Manin connection and it induces a $\cO_C-$linear map, called Higgs field: 
\[\theta: \o \ra \a\otimes \O^1_C\] which is related to Kodaira-Spencer map. The Higgs field can be defined in alternative ways: one can use
\[\xymatrix{
\o \ar[r] \ar@{-->}[drr] & \cE_C \ar[r] \ar[d]^\nabla & \a \\
&\cE_C \otimes \O^1_C \ar[r] & \a\otimes \O^1_C.
}
\] The other is from the long exact sequence of 
\[0\ra \pi^*\O_C \ra \O_X \ra \O_{X/C} \ra 0\] and the boundary map $\cdots \ra \pi_*\O_{X/C} \xrightarrow{\partial} R^1\pi_* \cO_X \otimes \O^1_C \ra \cdots$ gives the Higgs field.
Condition (3) in \ref{main thm} just means the map $\theta$ is isomorphic.

\begin{thm}(\cite[Main Theorem 1]{deJ} ) \label{equiv on curve} The category of Dieudonne crystals over $\cri(C/W(k))$ is anti-equivalent to the category of BT groups over $C$. 
\end{thm}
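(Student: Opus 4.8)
The plan is to prove that the crystalline Dieudonné functor $\DD(G)=\scr{E}xt^1(\underline G,\cO_C)$, which sends a BT group over $C$ to a Dieudonné crystal $(\DD(G),F,V)$, realizes the asserted anti-equivalence by establishing separately that it is (i) fully faithful and (ii) essentially surjective. Since both sides form stacks for the fppf topology on $C$ and the functor is compatible with base change, I would first reduce to a local question: cover $C$ by open affines $U$ admitting a smooth lift $\tilde U$ over $W(k)$ together with a lift $\tilde\s$ of Frobenius, so that a Dieudonné crystal restricts to a quadruple $(M,\nabla,F,V)$ with $M$ a finite locally free $\cO_{\tilde U}$-module, $\nabla$ an integrable connection, and $F,V$ the usual operators satisfying $FV=VF=p$. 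Over such $U$ the problem becomes concrete, and both a homomorphism of BT groups and a BT group itself can be glued from local data once uniqueness is known; hence full faithfulness and essential surjectivity over the affines suffice.

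For full faithfulness one must show that $\Hom_C(G,H)\to\Hom(\DD(H),\DD(G))$ is bijective. Faithfulness is the easy direction: a nonzero homomorphism of BT groups remains nonzero after restriction to a closed point $c\in C$, where classical contravariant Dieudonné theory over the perfect field $k$ is an equivalence and therefore detects it on Dieudonné modules. The substance is \emph{fullness}. Fibrewise, a homomorphism $\DD(H)\to\DD(G)$ of Dieudonné crystals specializes at each closed point to a homomorphism of Dieudonné modules, which by classical theory comes from a unique homomorphism $G_c\to H_c$; the task is to show that these assemble into a genuine homomorphism $G\to H$ over all of $C$. This is exactly where de Jong's formal-and-rigid method enters: one passes to the $p$-adic formal completion and then to its rigid analytic generic fibre, extends the homomorphism there, and invokes a Tate-type rigidity result to descend it to an honest homomorphism of BT groups over $C$. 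I expect this extension step, controlling a generically defined homomorphism and proving it is everywhere defined and integral, to be the main obstacle.

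For essential surjectivity, given a Dieudonné crystal $\cE$ with its Hodge filtration $0\to\o\to\cE_C\to\a\to 0$, I would reconstruct a BT group $G$ by deformation. At each closed point $c$, classical Dieudonné theory supplies a BT group $G_c$ over $k$ with the prescribed Dieudonné module. Grothendieck--Messing theory then identifies deformations of $G_c$ along the curve with liftings of the Hodge filtration inside the evaluation of the crystal on the relevant divided-power thickenings, and the filtration $\o\subset\cE_C$ is precisely this datum; since $C$ is a smooth curve the relevant obstruction groups vanish, so one lifts $G_c$ successively over the infinitesimal and then formal neighbourhoods, algebraizes, and glues. Full faithfulness guarantees that the locally constructed BT groups agree on overlaps, producing a global $G$ over $C$ with $\DD(G)\cong\cE$ as Dieudonné crystals. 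Combining (i) and (ii) yields the anti-equivalence.
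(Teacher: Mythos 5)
The first thing to note is that the paper does not prove this statement at all: it is quoted verbatim from de Jong (\cite[Main Theorem 1]{deJ}) and used as a black box, so there is no internal proof to compare against; the only meaningful benchmark is de Jong's own argument.

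Measured against that, your proposal reproduces the correct architecture (the contravariant functor $\DD$, full faithfulness as the hard step attacked via formal and rigid geometry, essential surjectivity via classical Dieudonn\'e theory at closed points plus deformation theory), but as a proof it has genuine gaps. The decisive one is fullness: you delegate it to ``a Tate-type rigidity result'' on rigid-analytic generic fibres. No such result is available to be invoked: Tate's extension theorem for homomorphisms of $p$-divisible groups is a mixed-characteristic statement, and its equal-characteristic analogue --- extending homomorphisms over bases like $\Spec k[[t]]$ from the generic point, and the rigid-analytic globalization of this --- is exactly the main content of de Jong's paper, not a prior tool one can cite in proving it. A second gap is in essential surjectivity: the assertion that the Hodge filtration $\o \subset \cE_C$ ``is precisely'' the Grothendieck--Messing datum is not correct as stated. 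Grothendieck--Messing requires a \emph{lift} of the Hodge filtration to the evaluation of the crystal on each divided-power thickening; the crystal does not come with canonical lifts, and producing a compatible system of them (lifts exist locally because $\o$ is a direct summand, but compatibility, independence of choices, algebraization of the resulting formal BT group, and gluing all require argument) is where the substance lies --- this is not a matter of ``obstruction groups vanishing because $C$ is a curve.'' Finally, even your ``easy'' faithfulness step (a nonzero homomorphism of BT groups stays nonzero at a closed point) is not automatic; it needs Drinfeld's rigidity lemma for $p$-divisible groups along nilpotent thickenings together with a passage from the formal neighbourhood back to $C$. In short, the plan is a fair reading of how the theorem is proved in the literature, but every step that makes the theorem a theorem is deferred rather than carried out.
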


\subsection{Tannakian category}

\begin{defn} Let $L$ be a field of characteristic 0.
A Tannakian category $T$ (over $L$) is a $L-$linear neutral rigid tensor abelian category with an exact fiber functor $\o: T\ra \rm{Vect}_L $. 
\end{defn}

\begin{thm} (\cite[Theorem 2.11]{Deli}) \label{Deli}
For any Tannakian category $T$, there exists an $L-$algebraic group $G$ such that $T$ is equivalent to $\Rep_L(G)$ as tensor categories. 
\end{thm}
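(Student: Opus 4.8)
The plan is to follow the coend, or ``reconstruction,'' strategy: build a Hopf algebra out of the fiber functor $\omega$ and recover $T$ as its category of comodules, so that the sought-for group is $G=\Spec A$ for a suitable Hopf algebra $A$ (equivalently, $G=\underline{\Aut}^\otimes(\omega)$, the affine group scheme of tensor automorphisms of $\omega$). Concretely, I would set
$$A=\int^{X\in T}\omega(X)^\vee\otimes_L\omega(X),$$
the coend of the functor $(X,Y)\mapsto \omega(X)^\vee\otimes_L\omega(Y)$; equivalently $A=\varinjlim_{X}\,\End_L\!\big(\omega|_{\langle X\rangle}\big)^\vee$, where $\langle X\rangle$ runs over the finitely generated abelian subcategories of $T$ and the dual of each finite-dimensional endomorphism algebra is a finite-dimensional coalgebra. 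Each $\omega(X)$ is then canonically an $A$-comodule, so $\omega$ factors through a functor $\widetilde\omega\colon T\to \mathrm{Comod}_A$ into finite-dimensional $A$-comodules, and the whole problem becomes showing that $\widetilde\omega$ is a tensor equivalence after upgrading $A$ to a Hopf algebra.

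The key steps, in order, are: (1) verify that $A$ is well defined as a filtered colimit, using that $\omega$ is $L$-linear, exact and faithful, so that each $\End_L(\omega|_{\langle X\rangle})$ is a finite-dimensional $L$-algebra and the restriction transition maps are surjective (dually, injective on coalgebras); (2) show that $\widetilde\omega$ is an equivalence of abelian categories onto $\mathrm{Comod}_A$ — this is the heart of the reconstruction and uses only the neutral, exact, faithful structure, \emph{not} yet the tensor structure; (3) use that $\omega$ is a \emph{tensor} functor to endow $A$ with an associative multiplication compatible with the coalgebra structure, the multiplication being dual to the isomorphisms $\omega(X)\otimes_L\omega(Y)\cong\omega(X\otimes Y)$, so that $A$ becomes a bialgebra; (4) use \emph{rigidity} of $T$ (existence of duals) to construct an antipode on $A$, making $A$ a Hopf algebra and $G=\Spec A$ an affine group scheme over $L$; and (5) identify $\mathrm{Comod}_A\simeq \Rep_L(G)$, whence $\widetilde\omega$ yields the desired tensor equivalence $T\simeq\Rep_L(G)$ intertwining $\omega$ with the forgetful fiber functor.

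The hard part will be step (2), the proof that $\widetilde\omega$ is an equivalence. Full faithfulness amounts to the assertion that $\Hom_T(X,Y)\to \Hom_A(\omega(X),\omega(Y))$ is bijective, which I would deduce from the universal property of the coend together with the faithfulness and exactness of $\omega$. Essential surjectivity is more delicate: every finite-dimensional $A$-comodule must be shown to arise from an object of $T$, and here one exploits that $A$ is the filtered union of the finite-dimensional coalgebras $\End_L(\omega|_{\langle X\rangle})^\vee$, each of which already corresponds to the finitely generated subcategory $\langle X\rangle$, and that $T$ is the union of these subcategories. Steps (3) and (4) are conceptually forced but must be checked carefully, since they are exactly where the tensor and rigidity hypotheses enter beyond ``neutral abelian''; I would verify the bialgebra axioms and the antipode relations by testing the relevant diagrams after applying $\omega$ and invoking its faithfulness, which reduces each identity to a computation in $\mathrm{Vect}_L$.
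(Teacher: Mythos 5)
Your sketch is the standard Tannakian reconstruction argument and is essentially the proof of the result being quoted: the paper itself gives no proof of this theorem, citing it as \cite[Theorem 2.11]{Deli}, whose proof proceeds exactly through your steps --- the coalgebra $A=\varinjlim_X \End_L\big(\omega|_{\langle X\rangle}\big)^\vee$, the equivalence of $T$ with finite-dimensional $A$-comodules, the bialgebra structure coming from the tensor structure of $\omega$, and the antipode coming from rigidity. One caveat: your construction (correctly) produces an affine group scheme $G=\Spec A$ that need not be of finite type, so the word ``algebraic'' in the statement should really be read as ``affine group scheme,'' which is consistent with the paper's own later remark that $G_{\mathrm{univ}}$ is an affine group scheme over $B(k)$.
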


\begin{example} (\cite[VI 3.1.1, 3.2.1]{Saa} ) \label{T and iso}
Inverting $p$ in the category $\cris(C)$, we obtain $\iso(C)$. The category $\iso(C)$ forms a Tannakian category, with the obvious fiber functor.  Hence there exists a $B(k)-$affine group scheme $G_\univ$ such that the following two categories are equivalent. 
\[\{\text{finite locally free isocrystals on } C/W(k) \} \longleftrightarrow \Rep_{B(k)}(G_\univ).\]
An object $\cF'$ in $\iso(C)$ is called effective if it is from an object $\cF$ in $\cris(C)$,i.e. $\cF' =\cF \otimes B(k)$. For any morphism $f: \cF\otimes B(k) \ra \cG \otimes B(k)$ between effective objects in $\iso(C)$, there exists $m\in \Z$ such that $p^m f: \cF \ra \cG$ is a morphism in $\cris(C)$. 

Note different from (\cite[VI 3.1.1, 3.2.1]{Saa}), $\iso(C)$ denotes just the isocrystals, not the $F$-isocrystals. So $G_\univ$ is an affine group scheme over $B(k)$.
\end{example}

We will need another result later. 
\begin{prop} \label{the inclusion}
For any Tannakian category $T$ and $W, V \in T$, let $<W>$ denote the Tannakian subcategory generated by $W$, with Tannakian group $G_W$. Similarly, $<V>=\Rep_L(G_V),<W,V>=\Rep_L(K)$. Then there exists a natural injection $K \hookrightarrow G_W\times G_V$.
\end{prop}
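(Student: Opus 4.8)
The plan is to translate the two inclusions of Tannakian subcategories $\langle W\rangle\hookrightarrow\langle W,V\rangle$ and $\langle V\rangle\hookrightarrow\langle W,V\rangle$ into homomorphisms of affine group schemes and then to build the desired map out of them. The part of the Tannakian dictionary underlying \ref{Deli} says that a full tensor subcategory of $\Rep_L(K)$ that is stable under subquotients is precisely the essential image of $w^f\colon\Rep_L(K')\to\Rep_L(K)$ for a faithfully flat homomorphism $f\colon K\twoheadrightarrow K'$. Applying this to the subcategories $\langle W\rangle$ and $\langle V\rangle$ of $\langle W,V\rangle=\Rep_L(K)$ yields two canonical surjections $p_W\colon K\twoheadrightarrow G_W$ and $p_V\colon K\twoheadrightarrow G_V$. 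The universal property of the product then produces the natural homomorphism
\[
f=(p_W,p_V)\colon K\longrightarrow G_W\times G_V,
\]
natural because it is assembled from the two canonical quotient maps, and the content of the proposition is that $f$ is a closed immersion.

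For the underlying set-theoretic injectivity there is a clean motivating argument via kernels. Put $N=\ker p_W\cap\ker p_V$, a normal subgroup scheme of $K$. By construction $\ker p_W$ acts trivially on every object of $\langle W\rangle$ and $\ker p_V$ acts trivially on every object of $\langle V\rangle$, so $N$ acts trivially on both. The full subcategory of $\Rep_L(K)$ of representations on which $N$ acts trivially is itself a Tannakian subcategory, being closed under direct sums, tensor products, duals and subquotients; since it contains $W$ and $V$ it must contain all of $\langle W,V\rangle=\Rep_L(K)$. Hence $N$ acts trivially on every representation of $K$, and since $K$ is recovered as the tensor automorphisms of the (faithful) fibre functor, this forces $N=1$. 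Thus $f$ is a monomorphism.

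To upgrade this to a closed immersion I would invoke the representation-theoretic criterion (the analogue of \cite[Prop.~2.21]{Deli}): $f$ is a closed immersion exactly when every object of $\Rep_L(K)$ is isomorphic to a subquotient of $f^*(U)$ for some $U\in\Rep_L(G_W\times G_V)$. I verify this directly from the description of $\langle W,V\rangle$: each of its objects is a subquotient of a finite direct sum of objects $W^{\otimes a}\otimes(W^\vee)^{\otimes b}\otimes V^{\otimes c}\otimes(V^\vee)^{\otimes d}$, and after reordering the tensor factors this is $A\otimes B$ with $A=W^{\otimes a}\otimes(W^\vee)^{\otimes b}\in\langle W\rangle$ and $B=V^{\otimes c}\otimes(V^\vee)^{\otimes d}\in\langle V\rangle$. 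Forming the external tensor product $A\boxtimes B:=\mathrm{pr}_1^*A\otimes\mathrm{pr}_2^*B\in\Rep_L(G_W\times G_V)$ and using $\mathrm{pr}_1\circ f=p_W$, $\mathrm{pr}_2\circ f=p_V$, one gets $f^*(A\boxtimes B)\cong A\otimes B$ in $T$, so the criterion is met and $f$ is a closed immersion.

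The only place needing genuine care, and hence the main obstacle, is this last identification: one must set up the external tensor product via the two projections of $G_W\times G_V$ and confirm the compatibility $f^*(A\boxtimes B)\cong A\otimes B$, which is precisely where the hypothesis that we generate by exactly the two objects $W$ and $V$ is used (a generating object factors as a product of a $W$-part and a $V$-part). Apart from this bookkeeping the statement is a formal consequence of the Tannakian formalism, so I do not expect any deeper difficulty.
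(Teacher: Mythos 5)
Your proof follows the same route as the paper's: obtain the two canonical surjections $K\twoheadrightarrow G_W$, $K\twoheadrightarrow G_V$ from \cite[2.21]{Deli}, form the product map, and conclude it is a closed immersion via the subquotient criterion of \cite[2.21(2)]{Deli}. The paper simply asserts that this criterion is satisfied, whereas you verify it explicitly via the external tensor product $A\boxtimes B$ and add a kernel argument for injectivity; both additions are correct and the argument is sound.
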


\begin{proof}
Since $W, V\in \Rep(K)$, by (\cite[2.21]{Deli}), $K$ admits surjections onto $G_W$ and $G_V$. Then $K$ admits a map $K \ra G_W\times G_V$. The induced morphism $\Rep(G_W \times G_V) \ra \Rep(K)$ satisfies (\cite[2.21(2)]{Deli}). So the map is injective. 
\end{proof}

\section{Example} \label{example}

In this section, we show that some good reductions of Mumford curves satisfy the conditions in \ref{main thm}. This is the main theorem in \cite{JX}.

\begin{thm} (\cite[Theorem 1.2]{JX}) \label{my thm}

For infinitely many prime $p$, there exists a family of principally polarized abelian varieties of $2^m$ dimension over a smooth proper curve $\tilde f: \tilde X \ra \tilde C$ over $W(k)$ such that 
\begin{enumerate}
\item $(\tilde X \xrightarrow{\tilde f} \tilde C)\otimes \C $ is a Mumford curve,
\item the reduction of $\tilde X \ra \tilde C$ at $k$ is generically ordinary,
\item the Dieudonne crystal $\cE\cong \cV \otimes \cT$ where $\cV$ is a Dieudonne crystal of rank 2 with maximal Higgs field and $\cT$ is a unit root crystal of rank $2^m$.
\end{enumerate}
\end{thm}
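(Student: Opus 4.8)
The plan is to realize $\tilde f:\tilde X\ra\tilde C$ as the reduction of the universal abelian scheme over an integral canonical model of the Mumford curve, and then to read off the tensor decomposition and the slope structure from the action of the quaternion algebra at a carefully chosen prime. First I would descend the Shimura curve of Mumford type attached to $(Q,h)$ to its canonical model $M_E$ over the reflex field $E$, together with the universal family of principally polarized abelian $2^m$-folds over $M_E$. Since $M_E$ is of Hodge type, the work of Kisin \cite{Kisin} furnishes a smooth integral canonical model over $\cO_{E,\frak{p}}$ for every prime $\frak{p}$ of $E$ lying over all but finitely many rational primes $p$, and the universal abelian scheme extends over it. Setting $W(k)=\widehat{\cO_{E,\frak{p}}}$ (after enlarging the residue field to $k$) produces a smooth proper $\tilde C$ carrying a family whose complex fibre recovers the Mumford curve; this is exactly assertion (1).

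Next I would fix the set of admissible primes. By Chebotarev density there are infinitely many $p$ which simultaneously (i) give good reduction, (ii) split completely in the totally real field $K$, and (iii) are unramified for $D$ at every place above $p$. For such $p$ one has $D\otimes_\Q\Q_p\cong M_2(\Q_p)^{\times(m+1)}$, hence $Q_{\Q_p}\cong SL_2(\Q_p)^{\times(m+1)}$, so that the $2^{m+1}$-dimensional representation $V$ splits $p$-adically as the tensor product of $m+1$ copies of the standard $2$-dimensional representation, exactly as in (\ref{the tensor decomposition of V}) but now over $\Q_p$.

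The core step is to transport this $p$-adic tensor structure to the Dieudonn\'e crystal and to extract the slopes. The Mumford--Tate reduction is cut out by Hodge tensors on the first relative cohomology; at a prime of good reduction these possess crystalline realizations, i.e. horizontal $F$-invariant tensors on $\cE$, which force a decomposition of isocrystals $\cE\cong\cV\otimes\cT$. Here $\cV$ (rank $2$) corresponds to the unique split real factor $SL(2,\R)$ carrying the Hodge cocharacter, while $\cT$ (rank $2^m$) corresponds to the $m$ factors $SU(2)$ on which $h$ acts trivially. Because $h$ is trivial on those $m$ factors, the Hodge filtration $\o$ of $\cE$ is concentrated in $\cV$, i.e. $\o_\cT=0$; consequently the Frobenius on $\cT$ is an isomorphism and $\cT$ is a unit-root crystal of rank $2^m$, while $\cV$ carries the entire Hodge filtration. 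Since the Higgs field of $\cE=\DD(\tilde X[p^\infty])$ then factors as $\th_\cV\otimes\mathrm{Id}_\cT$, maximality of the Higgs field of the universal family (Theorem \ref{Moller}) descends to maximality of $\th_\cV$, giving (3). Ordinarity (2) follows by a slope count: as $\cT$ is unit-root and $\cV$ has Hodge numbers $\{0,1\}$, whenever $\cV$ is generically ordinary the Newton polygon of $\cV\otimes\cT$ has slopes $0$ and $1$ each with multiplicity $2^m$, matching its Hodge polygon; and generic ordinarity of $\cV$ holds at the chosen split primes because the Hasse invariant of $\cV$, a nonzero section forced by the maximal Higgs field, vanishes only on a finite set.

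The step I expect to be the main obstacle is the \emph{integral} transport of the decomposition. The Hodge tensors directly give a splitting of \emph{isocrystals} over $B(k)$, but upgrading it to an isogeny of Dieudonn\'e crystals with $\cT$ genuinely unit-root over the \emph{entire} curve --- including the finitely many non-ordinary points --- requires controlling the crystalline realizations of the Tate classes integrally and excluding slope jumps that would destroy the unit-root property along $\cT$. Closely tied to this is establishing that the ordinary locus is nonempty for infinitely many $p$, which is precisely where the Chebotarev choice of completely split primes enters.
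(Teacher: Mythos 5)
The first thing to say is that this paper does not actually prove the statement: it is quoted as Theorem 1.2 of \cite{JX}, and the only indication given here of how it is proved is the remark immediately following it, which attributes the generic ordinarity in (2) to the Newton--polygon computations of Sheng and Zuo \cite{Zuo2}. Your overall architecture --- canonical/integral model of the Mumford curve, primes split in $K$ at which $D$ splits so that $Q_{\Q_p}\cong SL_2(\Q_p)^{\times (m+1)}$, and crystalline realizations of the Hodge tensors forcing a tensor decomposition of isocrystals --- is consistent with that source and with the way the present paper uses the result, so parts (1) and the isocrystal-level decomposition are plausibly argued.

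The genuine gap is in your derivation of (2) and (3), which you chain as: Hodge cocharacter supported on one $SL_2$ factor $\Rightarrow$ $\cT$ unit root $\Rightarrow$ (via maximal Higgs field) $\cV$ generically ordinary. The first implication does not hold. The tensor factors $\cV_i$ are isocrystals defined only up to twist by rank-one isocrystals; normalizing each to have smallest slope $0$ and writing $\lambda_i$ for its largest slope, the only constraints coming from $\cE$ being the crystal of an abelian $2^m$-fold are $\lambda_i\ge 0$ and $\sum\lambda_i=1$. The ordinary configuration $(1,0,\dots,0)$ is one solution, but so is, say, $(1/2,1/2,0,\dots,0)$, and the basic case in which every slope of $\cE$ equals $1/2$ is permitted by Mazur's inequality and is not excluded by the position of the Hodge filtration; non-ordinary reductions of abelian varieties of Mumford's type do occur. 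Hence ``$\cT$ is unit root'' cannot be read off from $\omega_\cT=0$; it is equivalent to knowing the generic Newton polygon, i.e.\ it presupposes (2). Your only argument for (2) is that the Hasse invariant of $\cV$ is ``a nonzero section forced by the maximal Higgs field,'' but a section of a positive-degree line bundle can be identically zero, and the Hasse invariant of $\cV$ is only defined once the integral decomposition with $\cT$ unit root is in hand --- which is what is being proved. (A degree count on the Hodge line bundle of $\cV$ does rule out the everywhere-non-ordinary case, but again only after the decomposition is established.) This circularity is exactly where the content of the theorem sits, and it is why the statement is asserted only for infinitely many $p$: one needs an independent input --- the Newton polygon analysis of \cite{Zuo2}, or the ordinarity of reductions of CM points at suitably split primes --- to show the reduced curve meets the ordinary locus. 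Once (2) is secured, the slope bookkeeping (as in Lemma \ref{the slopes} of this paper) does yield (3), so the proof should be restructured to establish ordinarity first.
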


\begin{rmk}
Sheng Mao and Kang Zuo (\cite{Zuo2}) study the Newton polygon of good reductions of Mumford curves. Their results show the reduction of a Mumford curve over infinitely many primes intersect with ordinary locus. 
\end{rmk}

By Proposition 6.3 in \cite{JX}, we know that $\cT$ admits a tensor decomposition as rank 2 irreducible crystals:
$\cT\cong \cV_2 \otimes \cV_3 \cdots \otimes \cV_{m+1}.$ The Frobenius is a morphism permuting $\cV_i$. Specifically, there exists a permutation $s\in S_m$, rank 1 crystals $\cL_i$ with $\otimes_i \cL_i \cong \cO_C$ and isomorphisms 
\[\phi_i: \cV^\s_i \otimes B(k) \ra \cV_{s(i)}\otimes \cL_i \otimes B(k)\]  such that $F=\otimes_i \phi_i.$


 Then $\cT\cong \cV_2 \otimes \cdots \otimes \cV_{m+1}$ as isocrystals and hence $\cE \cong \cV \otimes \cV_2 \cdots \otimes \cV_{m+1}$ as isocrystals. So the reduction $X \ra C$ of $\tilde X \ra \tilde C$ satisfies (1) and (2) in \ref{main thm}.

For condition (3), by \ref{Moller}, the universal family over a Shimura curve of Hodge type has maximal Higgs field, and hence the Higgs field of the special fiber $X/C$ is also maximal. 

\section{the structure of $\cE$ as crystals} \label{crystal}

In this section, we choose an $F$-crystal model of $\cV_1 \otimes \cV_2 \otimes \cV_3\cdots \otimes \cV_{m+1}$. 

Note $\cE$ is an $F$- isocrystal. So \[F: \cV^\s_1\otimes \cV^\s_2 \otimes \cV^\s_3 \cdots \otimes \cV^\s_{m+1} \ra \cV_1 \otimes \cV_2 \otimes \cV_3 \cdots \otimes \cV_{m+1}\]
is a morphism in the category of isocrystals.

By condition (2) in \ref{main thm}, each $\cV_i$ is irreducible as isocrystals. Let $H_i$ be the Tannakian group associated to $\cV_i$ in $\Rep(G_\univ)$. 

\begin{lemma}
$H_i$ has the form $SL(2)\times \m_m$ for some integer $m$.
\end{lemma}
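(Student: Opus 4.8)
The plan is to realize $H_i$ as a faithful irreducible subgroup of $GL(2)$ and then to pin down its structure using the determinant character together with the Frobenius and polarization data carried by $\cE$.

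First I would make the Tannakian translation explicit. Since $\cV_i$ has rank $2$ and generates $\langle \cV_i\rangle = \Rep_{B(k)}(H_i)$, the fibre functor gives a faithful $2$-dimensional representation $H_i \hookrightarrow GL(\cV_i) \cong GL(2)$, and the hypothesis that $\cV_i$ is irreducible as an isocrystal says exactly that this representation is irreducible. In particular $H_i$ lies in no Borel subgroup, so its identity component $H_i^\circ$ is reductive, a nontrivial unipotent radical producing a nonzero invariant line. I would then invoke the classification of irreducible Zariski-closed subgroups of $GL(2)$ over $\overline{B(k)}$: such a group either contains $SL(2)$, or its identity component is a maximal torus $T$ that it normalizes (the dihedral case, in which $\cV_i$ restricts to a sum of two characters interchanged by the component group). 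The goal is thus to exclude the dihedral case and to identify the image of the determinant character $\det = \wedge^2 : H_i \to \mathbb{G}_m$, which corresponds to the rank-one object $\wedge^2\cV_i$.

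To control the determinant I would combine the polarization with the Frobenius decomposition. The principal polarization endows $\cE$ with a perfect alternating pairing, so that $\cE$ is self-dual as an isocrystal (Tate twists being trivial in $\iso(C)$) and $\det\cE \cong \cO_C$; since $\det(\cV_1\otimes\cdots\otimes\cV_{m+1}) \cong \bigotimes_i (\wedge^2\cV_i)^{\otimes 2^m}$, this already gives $\bigotimes_i (\wedge^2\cV_i)^{\otimes 2^m} \cong \cO_C$. Applying $\wedge^2$ to the Frobenius isomorphisms $\phi_i : \cV_i^\s\otimes B(k) \xrightarrow{\sim} \cV_{s(i)}\otimes\cL_i\otimes B(k)$ yields $(\wedge^2\cV_i)^\s \cong \wedge^2\cV_{s(i)} \otimes \cL_i^{\otimes 2}$, and running around each cycle of the permutation $s$ while using $\bigotimes_i \cL_i \cong \cO_C$ should show that a suitable tensor power $(\wedge^2\cV_i)^{\otimes N}$ is trivial. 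Hence the determinant has finite image, contained in some group of scalars $\m_m \subset \mathbb{G}_m$; in particular $H_i \neq GL(2)$.

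It remains to rule out the dihedral case, after which $SL(2) = [H_i,H_i] \subseteq H_i$, $H_i^\circ = SL(2)$, and $H_i$ is generated by $SL(2)$ and the finite scalar group $\m_m$, so that the multiplication $SL(2)\times\m_m \to H_i$ realizes $H_i \cong SL(2)\times\m_m$. For the exclusion I would use the Newton slopes: for the ordinary factor $\cV_1$ the slopes are $\{0,f\}$ with $f\neq 0$, so the two characters appearing after restriction to $T$ carry distinct slopes, which the normalizing Weyl element would be forced to interchange — impossible, since the Frobenius structure preserves slopes. For the remaining unit-root factors this slope asymmetry is unavailable, and I expect the dihedral exclusion there to be the main obstacle: one must genuinely transport the obstruction through the Frobenius identifications $\phi_i$, which tie the $\cV_i$ together up to twist and $\s$-pullback, and exploit the irreducibility and self-duality of the full tensor $\cE = \bigotimes_i \cV_i$ to conclude that no $\cV_i$ can be induced from a torus.
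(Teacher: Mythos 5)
Your reduction to a faithful irreducible two-dimensional representation $H_i \hookrightarrow GL(2)$ and the use of the polarization to control the determinant are in the spirit of the paper's proof, but the proposal has a genuine gap and a circularity. The gap is the one you flag yourself: you never actually exclude the imprimitive (dihedral) case for the unit-root factors, and your stated classification of irreducible closed subgroups of $GL(2,\overline{B(k)})$ also omits the finite primitive subgroups (those with image $A_4$, $S_4$ or $A_5$ in $PGL_2$), which are reductive and act irreducibly yet neither contain $SL(2)$ nor normalize a torus; for all of these the conclusion $H_i\cong SL(2)\times\m_m$ fails, so the argument as written does not prove the lemma. For comparison, the paper's proof is very short: reductivity plus a faithful irreducible $2$-dimensional representation is asserted to force $H_i\otimes \bar B(k)$ to be a central extension of $SL(2)$, hence $H_i$ is $SL(2)\times\m_m$ or $GL(2)$ over $B(k)$; the principal polarization then makes $\bigotimes_i\wedge^2 V_i$ torsion, which is used to rule out $GL(2)$. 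So the paper simply does not engage with the dihedral/finite cases that you correctly identify as the real difficulty, and your determinant step matches the second half of its argument (indeed you are more careful, in trying to make each $\wedge^2\cV_i$ individually torsion rather than only the product).

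The circularity: you invoke the tensor decomposition $F=\bigotimes_i\phi_i$ with $\phi_i:\cV_i^\s\ra\cV_{s(i)}\otimes\cL_i$ and the slope list $\{0,f\},\{0,0\},\dots$ for the individual $\cV_i$. In the paper both of these are established only \emph{after} this lemma: the decomposition of $F$ is deduced from the identification of the groups $H_i$ together with the irreducibility of $\otimes V_i$ as a representation of $SL(2)^{\times m+1}\times\m_m$, and the slopes of the individual $\cV_i$ only make sense once each $\cV_i$ has been upgraded to an $F^f$-isocrystal, which again depends on that decomposition. So neither can be taken as input here without reorganizing the whole section; at the point where this lemma sits, the only available data are the irreducibility of the $\cV_i$ and of $\cE$, the polarization, and the bare fact that $F$ is a morphism of isocrystals.
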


\begin{proof}
Since $\cV_i$ are all irreducible isocrystals, each of the group $H_i$ admits a faithful irreducible representation of dimension 2. Thereby $H_i$ is a reductive group over $B(k)$. Base change to $\bar B(k)$, $H_i \otimes \bar B(k)$ must be a central extension of $SL(2, \bar B(k))$.  Since $H_i \hookrightarrow GL(2, B(k))$ and the field extension is faithfully flat,  $H_i$ is $SL(2, B(k)) \times \m_m$ or $GL(2, B(k))$.

Since $X \ra C$ is principally polarized, we have $\cE \cong \cE^\vee$ up to some twist.  Thereby as representations, $\wedge^8 E \cong \wedge^8 E^\vee$ and hence $\wedge^2 V_1 \otimes \cdots \wedge^2 V_{m+1}$ is 2-torsion. Therefore $\det H_i$ can not be $\mathbb{G}_m$, i.e. $H_i$ can not be $GL(2)$.
\end{proof}

As an irreducible representation of $SL(2) \times \m_m$, $V_i \cong V'_i \otimes L_i$ where $V'_i$ the standard representation of $SL(2)$ and $L_i$ an irreducible representation of $\m_m$.  Thus we can adjust each $V_i$ such that $H_i=SL(2) $ for $1\leq i \leq m$ and only $H_{m+1}=SL(2) \times \m_m$ . 

Since $\cE$ is irreducible, $\otimes \cV_i$ corresponds to an irreducible $SL(2)^{\times m+1}\times \m_m-$representation.  Then we can modify the proof of Proposition 6.3 in \cite{JX} in this case and obtain a similar result: there exist 
\[\phi_i: \cV^{\s}_i  \ra \cV_{s(i)} \otimes \cL_i \]
 such that $F=\otimes \phi_i$. 
We can find an $f$ such that $s^f=\mathrm{Id}$. Then $F^f$ can be decomposed to the tensor product of 
\[\phi'_i: \cV^{\s^f}_i \ra \cV_i \otimes \sL_i \] for some $\sL_i$.  

\begin{prop} \label{surjectivity} (\cite[Proposition 7.1]{JX})
The group endomorphism $\s^{f\,*}-\mathrm{Id}$ of $\Pic(\bar C/W(k)_\cri)$ is surjective. 
\end{prop}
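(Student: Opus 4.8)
The plan is to understand the Picard group of the crystalline site $\cri(\bar C/W(k))$ and analyze the operator $\s^{f*}-\mathrm{Id}$ on it. First I would identify what $\Pic(\bar C/W(k)_\cri)$ classifies: it is the group of rank one crystals, equivalently (via the crystal–connection dictionary recalled earlier) line bundles with an integrable connection on a fixed $W(k)$-lifting $\tilde C$, with the Frobenius pullback $\s^{f*}$ acting on this group. Since the underlying curve $\bar C$ is proper and smooth and we have inverted nothing ($W(k)$-coefficients), I expect $\Pic(\bar C/W(k)_\cri)$ to sit in a sequence relating the line bundles on the reduction to the formal/crystalline deformation data, so that surjectivity of $\s^{f*}-\mathrm{Id}$ can be checked by a devissage: first modulo $p$, then lifting successively over $W_n(k)$.

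The key computational input is that Frobenius acts on line bundles by \emph{multiplication by $p$ on degrees}: for a line bundle $\cL$ of degree $d$ on $\bar C$, the pullback $\s^* \cL$ has degree $pd$ (as $\s$ is the absolute Frobenius, or more precisely its $f$-fold iterate multiplies degrees by $p^f$). Thus on the degree component $\Z$, the operator $\s^{f*}-\mathrm{Id}$ acts as multiplication by $p^f - 1$, which is an \emph{isomorphism} on the $p$-adically complete groups that arise from the crystalline structure because $p^f-1$ is a unit in $W(k)$ (it is coprime to $p$). This is the heart of the matter: wherever the operator $\s^{f*}-\mathrm{Id}$ reduces to multiplication by a factor of the form $p^{fj}-1$ or more generally by something $\equiv -1 \pmod p$, it becomes invertible on a $p$-complete module. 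On the $\Pic^0$ part I expect the same phenomenon: Frobenius acts on the relevant Dieudonné or formal group data with eigenvalues that are divisible by $p$ (up to the unit-root part), so that $\s^{f*}-\mathrm{Id}$ is again congruent to a unit modulo $p$ and hence surjective by a successive-approximation / completeness argument.

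Concretely, the steps I would carry out are: (1) fix a lifting $\tilde C$ and translate $\Pic(\bar C/W(k)_\cri)$ into line bundles with connection, recording how $\s^{f*}$ acts; (2) set up a filtration by powers of $p$, i.e. reduce the surjectivity question to surjectivity of the induced map on each graded piece $\mathrm{gr}_n = p^n \Pic / p^{n+1}\Pic$, which is a module over $k$; (3) on each graded piece compute the induced endomorphism and verify it is surjective — here the degree part contributes $p^f - 1 \equiv -1$, a unit, and the divisible (connection/$\Pic^0$) part contributes an operator whose reduction mod $p$ is again bijective because the Frobenius-linear part is nilpotent or vanishes modulo $p$; (4) assemble the graded surjectivity into surjectivity on the $p$-adically complete group via the standard lemma that a $p$-adically continuous endomorphism of a $p$-complete group which is surjective on all graded pieces is itself surjective.

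The main obstacle, I expect, is step (3) for the $\Pic^0$ (connection) part rather than the easy degree part: one must check that modulo $p$ the Frobenius pullback $\s^{f*}$ kills the relevant piece (so that $\s^{f*}-\mathrm{Id}$ reduces to $-\mathrm{Id}$, manifestly bijective), and this rests on the fact that absolute Frobenius is the zero map on differentials / on the cotangent directions modulo $p$. Making this precise on the crystalline Picard group — as opposed to the naive Picard group of $\bar C$ — is where I would need to be careful, likely invoking the description of crystalline line bundles via the de Rham complex of $\tilde C$ and the fact that $\s$ induces multiplication by $p$ on $H^1_{dR}$ in the appropriate range. Once that congruence is established, the completeness argument closes the proof with no further difficulty.
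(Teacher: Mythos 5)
This proposition is not proved in the paper at all --- it is imported verbatim from \cite[Proposition 7.1]{JX} --- so there is no in-text argument to compare yours against; I can only assess your sketch on its own terms. Its overall shape (split $\Pic(\bar C/W(k)_\cri)$ into a discrete degree part, an abelian-variety part, and a formal ``connection'' part, and show $\s^{f\,*}-\mathrm{Id}$ is surjective on each) is reasonable, and your treatment of the formal part is sound: on the kernel of $\Pic(\bar C/W(k)_\cri)\to\Pic(\bar C)$, which is built from the divided-power ideal, $\s^{f\,*}$ is topologically nilpotent, so $\s^{f\,*}-\mathrm{Id}=-(\mathrm{Id}-\s^{f\,*})$ is invertible by a geometric series, exactly as you say.

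The two discrete pieces are where your argument breaks. First, the degree component: you assert that multiplication by $p^f-1$ is an isomorphism there ``because $p^f-1$ is a unit in $W(k)$,'' but the degree lives in $\Z$, which is not $p$-adically complete and on which multiplication by $p^f-1$ is visibly not surjective. What saves the statement is that this component is actually zero: the value on $\tilde C$ of a rank $1$ crystal carries an integrable connection, so its Atiyah class $c_1\in H^1(\tilde C,\Omega^1_{\tilde C/W(k)})\cong W(k)$ vanishes, forcing the underlying line bundle to have degree $0$; you never observe this, and without it the degree obstruction is fatal. Second, the $\Pic^0$ part: $\Pic^0(\bar C)=J(k)$ is a divisible group with abundant prime-to-$p$ torsion, not a $p$-adically complete one, so your filtration-by-powers-of-$p$/successive-approximation scheme in steps (2)--(4) simply does not apply to it. The correct argument there is elementary but different: since $\s$ is the absolute Frobenius, $\s^*\cL\cong\cL^{\otimes p}$, hence $\s^{f\,*}-\mathrm{Id}$ acts on $J(k)$ as multiplication by the integer $p^f-1$, which is surjective because $J(k)$ is divisible ($k$ being algebraically closed) --- equivalently, a Lang-type argument using that the differential of $\s^*$ vanishes. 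Once all three pieces are handled, surjectivity of the whole follows from the resulting exact sequences; as written, your proof establishes only the formal piece.
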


We can find $\cL'_i$ such that $\sL^{-1}_i=((\s^f)^*-\mathrm{Id})(\cL'_i)$, then $\phi'_i$ induces a morphism 
\[\cV^{\s^f}_i\otimes \cL'^{\s^f}_i \ra \cV_i \otimes \cL'_i.\] 
Replace $\cV_i$ by $\cV_i \otimes \cL'_i$ and let us still denote it as $\cV_i$. Therefore 
\[\phi'_i: \cV^{\s^f}_i \ra \cV_i .\]So each $\cV_i$ is an $F^f-$isocrystal. 

By condition (1) of \ref{main thm}, as an $F^f-$isocrystal, $\cE$ has generic slopes $\{0\times 2^m, f\times 2^m\}$.
\begin{lemma} \label{the slopes}
Generically, the slopes of $\cV_i$ are all $\{0,0\}$ except one $\{0,f\}$.
\end{lemma}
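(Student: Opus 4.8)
The plan is to read the Newton slopes of each factor off those of $\cE$ by exploiting that, after passing to $F^f$, the Frobenius of $\cE$ is literally the tensor product $F^f=\otimes_i\phi'_i$ of the Frobenii of the factors. Consequently, over the perfect closure of the generic point of $C$, where the Dieudonn\'e--Manin classification applies, the multiset of generic Newton slopes of $\cE$ as an $F^f$-isocrystal is the sumset-with-multiplicity of the slope multisets of the $\cV_i$: writing the slopes of $\cV_i$ as $\{s_i,t_i\}$ with $s_i\le t_i$, the $2^{m+1}$ slopes of $\cE$ are exactly the sums $\sum_i c_i$ with $c_i\in\{s_i,t_i\}$, counted with multiplicity. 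By condition (1) of \ref{main thm} (recalled just above), this multiset equals $\{0\times 2^m,\ f\times 2^m\}$, and $f>0$ since $s^f=\mathrm{Id}$.

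The core is then a short combinatorial argument. First I would extract the two extreme slopes: the smallest of the sums is $\sum_i s_i$ and the largest is $\sum_i t_i$, so $\sum_i s_i=0$ and $\sum_i t_i=f$. Next, flipping a single index $j$ from $s_j$ to $t_j$ produces the slope $\sum_i s_i+(t_j-s_j)=t_j-s_j$, which must therefore lie in $\{0,f\}$ for every $j$. Two indices with gap $f$ would contribute a realized slope $\ge 2f>f$, which is excluded; and since $\sum_j(t_j-s_j)=f-0=f$ while each gap is $0$ or $f$, exactly one index $j_0$ has $t_{j_0}-s_{j_0}=f$ and all the others satisfy $t_j=s_j$. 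This already shows that precisely one factor is non-isoclinic, with slope gap $f$, and that the rest are isoclinic.

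Finally I must pin the base values $s_i$ to $0$, which is the delicate point: slope additivity determines the factor slopes only up to distributing a common shift subject to $\sum_i s_i=0$ (e.g.\ simultaneously twisting $\cV_{j_0}$ and some $\cV_j$ by rank-one isoclinic objects of opposite slope leaves $\cE$ unchanged), so it does not by itself force the values $\{0,0\}$ and $\{0,f\}$. To remove this ambiguity I would use the effectivity built into the chosen $F^f$-crystal models: with each $\phi'_i\colon\cV^{\s^f}_i\to\cV_i$ realized as an honest morphism of crystals, every $\cV_i$ is an effective $F^f$-crystal, so all its slopes are $\ge 0$; then $s_i\ge 0$ together with $\sum_i s_i=0$ forces $s_i=0$ for every $i$, giving slopes $\{0,0\}$ for $i\ne j_0$ and $\{0,f\}$ for $i=j_0$. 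If effectivity of the individual models is not available, the same conclusion follows after one further normalization, using the surjectivity of $(\s^{f\,*}-\mathrm{Id})$ on $\Pic$ as in \ref{surjectivity} to produce rank-one isoclinic $\cL''_i$ of slope $-s_i$ with $\otimes_i\cL''_i\cong\cO_C$ and replacing $\cV_i$ by $\cV_i\otimes\cL''_i$, which shifts every base slope to $0$ while preserving the isomorphism class of $\cE$. I expect this last step --- justifying the exact base values rather than merely the shape of the Newton polygon --- to be the main obstacle, the combinatorics of slope additivity being routine.
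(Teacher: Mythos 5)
Your proof is correct and follows essentially the same route as the paper's: restrict to the geometric generic point, use that the Newton slopes of $\cE$ are the sums (with multiplicity) of the slopes of the $\cV_i$, and run a short combinatorial argument combined with a normalization making all slopes non-negative. Your version is in fact slightly more careful about the normalization ambiguity in pinning the base slopes to $0$, and it works uniformly in $m$, whereas the paper writes out only the case $m=2$ and simply asserts that the slopes may be assumed non-negative.
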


\begin{proof}
Use $\cV_{i,\eta}$ to denote the restriction of $\cV_i$ to the crystalline site $\cri(W(\eta^-)/\eta^-)$ where $\eta^-$ is the geometric generic point. Then slopes of $\cV_{1,\eta} \otimes \cV_{2,\eta} \otimes \cV_{3,\eta} \cdots \otimes \cV_{{m+1}, \eta}$ are $\{0\times 2^m, f\times 2^m\}$.  Now we can compute directly the slopes of each $\cV_i$. For notational simplicity, here we just show the case $m=2$.

Assume $\{a_1, a_2\}, \{b_1, b_2\}, \{c_1, c_2\}$ are slopes of $\cV_1, \cV_2, \cV_3$, respectively.  Further, we could assume the slopes of each $\cV_i$ are non-negative. Then the slopes of $\cE$ are 
\[a_i+b_j+c_k, 1\leq i,j,k\leq 2.\]
If $a_1 \leq a_2$ and $b_1 \leq b_2$, then we can adjust that $a_1=b_1=0$. Then $\{c_1, c_2 \}=\{0, f\}$ or $\{0, 0\}$. The former case forces all $a_i, b_j$ to be zero, while the latter case implies $\{a_2, b_2\} =\{0, f\}$. 
\end{proof}



 
\begin{prop} \label{s(j)=j}
$s(j)=j$ for some $j\in \{1,2,3\cdots, m+1\}$.
\end{prop}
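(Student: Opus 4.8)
The plan is to locate the fixed point precisely: it will be the distinguished factor produced by Lemma \ref{the slopes}, namely the unique $\cV_i$ whose generic Newton slopes are $\{0,f\}$ rather than $\{0,0\}$. The underlying principle is that the Newton slopes of the $\cV_i$ are constant along the orbits of $s$, so a factor carrying an \emph{exceptional} slope multiset cannot be moved by $s$ and must be fixed.

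First I would restrict everything to the geometric generic point $\eta^-$ and work with the $F^f$-isocrystals $\cV_{i,\eta}$ over its perfection, where the Dieudonne--Manin classification applies and each $\cV_{i,\eta}$ has a well-defined Newton polygon. By Lemma \ref{the slopes} the slope multiset of $\cV_{i,\eta}$ is either $\{0,0\}$ or $\{0,f\}$, and in particular its smallest slope is $0$; moreover exactly one index carries the multiset $\{0,f\}$.

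Next I would read off from the isomorphisms $\phi_i\colon \cV_i^\sigma \to \cV_{s(i)}\otimes \cL_i$ how the $F^f$-structure is transported along an $s$-orbit. Since $\phi'_i$ and $\phi'_{s(i)}$ are built from the same cyclic word of $\phi_j$'s around the orbit, starting only one step apart, the map $\phi_i$ should intertwine them up to the rank-one twist by $\cL_i$; concretely $\cV_{s(i),\eta}\cong \cV_{i,\eta}^{\sigma}\otimes \cL_{i,\eta}^{-1}$ as $F^f$-isocrystals. Using that pullback along $\sigma$ preserves Newton slopes and that tensoring with a rank-one $F^f$-isocrystal translates every slope by its single slope $v_i:=\mathrm{slope}(\cL_{i,\eta})$, I get that the slope multiset of $\cV_{s(i),\eta}$ is that of $\cV_{i,\eta}$ shifted by $v_i$. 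But a nonzero translation carries neither $\{0,0\}$ nor $\{0,f\}$ to an admissible multiset (as $f\neq 0$), so $v_i=0$ and the two factors share the same slopes. Hence the slope type is constant on each orbit of $s$, and since precisely one factor has slopes $\{0,f\}$, its orbit is a singleton, i.e. $s$ fixes that index.

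The hard part will be the bookkeeping in the third step: verifying that $\phi_i$ genuinely conjugates the $F^f$-structure $\phi'_i$ into $\phi'_{s(i)}$ up to a \emph{pure} line-bundle twist, carefully tracking the iterated Frobenius pullbacks of the $\cL_j$ that appear when the cyclic composite defining $\phi'_i$ is written out, and confirming the $\sigma^*$-invariance of Newton slopes at the perfected generic point. Once that compatibility is pinned down, the slope comparison and the final orbit argument are purely formal.
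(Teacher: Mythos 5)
Your proposal is correct and follows essentially the same route as the paper: both arguments combine Lemma \ref{the slopes} with the fact that the tensor-decomposed Frobenius $F=\otimes\phi_i$ transports the $F^f$-structure (hence the generic Newton slopes, up to a rank-one twist whose slope is forced to vanish) along the orbits of $s$, so the unique factor with slope multiset $\{0,f\}$ cannot be moved. The paper packages this as a direct contradiction — if $s$ moved that factor, the image of the corresponding $\phi'_i$ would have too small a unit-root part to match slopes $\{0,0\}$ — but the underlying slope bookkeeping is the one you describe.
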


\begin{proof}
From \ref{the slopes},  we can assume the slopes of $\cV_1$, restricting to $\cri(W(\eta^-)/\eta^-)$, are $\{0, f\}$. If $s(1)\neq 1$, say $s(2)=1$, then over $W(\eta^-)$, $\im \phi_2=\cV_{1,\eta}$ has only one-dimension subspace of slope 0. Iterate the Frobenius $f$ times and note $(\otimes \phi_i)^f=F^f=F_1 \otimes F_2 \otimes F_3 \cdots \otimes F_{m+1}$. Thus $\cV_{2,\eta}=\im F_2$ has at most one dimensional subspace of slope 0, contradicting to $\cV_2$ with slopes $\{0,0 \}$. So $s(1)=1$.
\end{proof}

Without loss of generality, assume $s(1)=1$. Then $\phi_1:\cV^\s_1 \ra \cV_1 \otimes \cL_1$.

By \ref{surjectivity}, we can find $\cL'$ such that $\cL^{-1}_1=\Pp(\cL')$, then $\phi_1$ induces a morphism \[\cV^\s_1\otimes \cL'^\s \ra \cV_1 \otimes \cL' .\] 
Replace $\cV_1$ by $\cV_1 \otimes \cL'$ and let us still denote it as $\cV_1$. Therefore 
\[\phi_1: \cV^\s_1 \ra \cV_1.\]

Then $\cV_1$ and $\cV_2\otimes \cV_3 \cdots \otimes \cV_{m+1}$ are actually $F$-isocrystals. And $\cE \cong \cV_1 \otimes (\cV_2 \otimes \cV_3 \cdots \otimes \cV_{m+1})$ as $F$-isocrystals. Since $\cE$ is further a Dieudonne crystal, the Verschiebung $V$ also can be decomposed to $V_1\otimes V_2$ where both of $V_1: \cV_1\ra \cV^\s_1, V_2: \cV_2 \otimes \cV_3 \cdots \otimes \cV_{m+1} \ra \cV^\s_2 \otimes \cV^\s_3 \cdots \otimes \cV^\s_{m+1}$ are isomorphisms.

Since all the slopes of $\cV_i$ are nonnegative, we can choose an $F^f-$crystal model for each $\cV_i$( see Appendix \ref{crystal model}). Since as an $F$-isocrystal, $\cV_1$ has slopes $\{0,1\}$, there exists $V_1: \cV_1 \ra \cV^\s_1$ such that $F_1 \circ V_1 =V_1 \circ F_1 = p$. Thereby we can choose the descent of Frobenius and Verschiebung so that $\cV_1$ is a Dieudonne crystal. For simplicity, we still denote the corresponding crystal as $\cV_i$. Though the isomorphism in \ref{main thm} (2) may not be true in the level of crystals, we still can choose 
 \[\psi: \cV_1 \otimes \cV_2 \otimes \cV_3 \cdots \otimes \cV_{m+1}\ra \cE\]
injectively between $F$-crystals and $\psi[\frac{1}{p}]$ is the isomorphism. Multiplying some power of $p$ if necessary, we can assume $\psi\mod p$ is not zero and $\cE/\im \psi$ is $p-$torsion. From the information of the slopes, we have $\cV_1$ is generically ordinary and $\cV_2 \otimes \cV_3 \cdots \otimes \cV_{m+1}$ is a unit root $F$-crystal.
For simplicity, we use $\cT$ to denote the $F$-crystal $\cV_2 \otimes \cV_3 \cdots \otimes \cV_{m+1}$ in the rest of the paper. 
\begin{rmk}
We observe that if we change the condition (2) in \ref{main thm} to the following
\begin{center}
$\cE \cong \cV_1 \otimes \cdots \otimes \cV_{m+1}$ as $F^f-$isocrystals for some integers $f$ where $\cV_i$ are all irreducible of rank 2,
\end{center} and further, $X \ra C$ is defined over a finite field,  then we can consider the (neutral) Tannakian category of $F^f-$isocrystals instead of isocrystals. Mimic the above arguments and we can achieve the identical result that $\cE$ is isogenous to a tensor decomposition of $\cV\otimes \cT$. 
\end{rmk}

\section{Maximal Higgs field} \label{Higgs}
In this section, we prove the second part of \ref{main thm}, the case that $X \ra C$ has maximal Higgs field.

By \ref{equiv on curve}, $\cV_1$ corresponds to a BT group over $C$. In particular, it admits a Hodge filtration: $0\ra \sL_1 \ra \cV_{1|C} \ra \sL_2 \ra 0$  with Higgs field $\theta_1: \sL_1  \ra \sL_2  \otimes \O^1_{C}$.  Since $\cT$ is a unit root crystal, the Hodge filtration is trivial. Therefore the Hodge filtration of the Dieudonne crystal $\cV_1 \otimes \cT $ is 
\[0\ra \sL_1 \otimes \cT_{|C} \ra (\cV_1\otimes \cT)_{|C} \ra \sL_2 \otimes \cT_{|C} \ra 0.\] 
The associated Higgs field $\theta'$ is $\theta_1 \otimes \text{id}_\cT$ where $\theta_1$ is the Higgs field of $\cV_1$.  
 
\subsection{Hodge and conjugate filtrations}For any Dieudonne crystal $\cF$ over $\cri(C/W(k))$, $\cF_C$ admits two filtrations: Hodge filtration and conjugation filtration. The relation between the two filtrations is shown in \cite[Proposition 2.5.2]{deJ}: conjugation filtration is given by the kernel of $F_C$: $\o^{(p)}\subset \cF^{(p)}_C$, which the Frobenius pullback of the Hodge filtration $\o\subset \cF$. Since the isogeny $\p: \cV_1 \otimes \cdots \otimes \cV_{m+1} \ra \cE$ is compatible with the Frobenius, $\psi$ induces actually a morphism between Hodge filtrations: 
\[\xymatrix{
\sL_1 \otimes \cT_{C} \ar[r]^{\p_1} \ar[d] & \o \ar[d]\\
 {(\cV_1)}_{C} \otimes \cT_{C} \ar[r] \ar[d] & \cE_{C} \ar[d]\\
\sL_2 \otimes \cT_{C} \ar[r]^{\p_2} & \a. \\
}\] 
Similarly, $\p_{C}$ also preserves the conjugate filtration: 
\[\xymatrix{
\sL^{(p)}_2 \otimes \cT_{C} \ar[d] \ar[r]^{\p^{(p)}_2} & \a^{(p)} \ar[d]\\
(\cV_1)_{C} \otimes \cT_{C}  \ar[d] \ar[r] & \cE_{C} \ar[d]\\
 \sL^{(p)}_1 \otimes \cT_{C} \ar[r]^{\p^{(p)}_1} & \o^{(p)}\\
}
\]
Further, since $\p$ preserves the connections, it induces a morphism between Higgs fields.
\begin{equation}\label{Higgs diagram}
\xymatrix{
\sL_1 \otimes \cT \ar[r]^{\p_1} \ar[d]_{\theta_1\otimes \mathrm{Id} \cong \theta'} & \o \ar[d]^\cong\\
\sL_2 \otimes \cT \otimes \O^1_{C} \ar[r]^{\p_2 \otimes \text{id}} & \a \otimes \O^1_{C}
}\end{equation}

\begin{lemma} \label{from a to a}
The Hodge and conjugate filtrations of $\cE$ induce a morphism $\a^{(p)} \ra \a$ which is generically surjective over $C$.
\end{lemma}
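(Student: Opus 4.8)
The plan is to produce the map as the composite of the conjugate sub-filtration inclusion with the Hodge projection, and then to show it is an isomorphism over the ordinary locus, which is nonempty by hypothesis (1) of \ref{main thm}. First I would construct the morphism directly from the two filtration diagrams already set up. By \cite[Proposition 2.5.2]{deJ} the Frobenius $F_C:\cE^{(p)}_C\ra\cE_C$ has kernel exactly the conjugate filtration $\o^{(p)}\subset\cE^{(p)}_C$, so $F_C$ factors through an injection $\a^{(p)}=\cE^{(p)}_C/\o^{(p)}\hookrightarrow\cE_C$ whose image is the conjugate sub-bundle $\im F_C$. Composing this inclusion with the Hodge quotient $\cE_C\twoheadrightarrow\a$ yields the $\cO_C$-linear map
\[
\phi:\ \a^{(p)}\hookrightarrow\cE_C\twoheadrightarrow\a.
\]
Since $\a^{(p)}$ and $\a$ are both vector bundles of rank $2^m$ on $C$, the cokernel of $\phi$ is coherent with closed support, so $\phi$ is generically surjective as soon as it is surjective at a single closed point; and at a point surjectivity between bundles of equal rank is equivalent to bijectivity.

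Next I would reduce bijectivity at a point to ordinarity. The key is to identify $\phi$ with the linearized Frobenius on $\a=R^1\pi_*\cO_X$, i.e.\ the Hasse--Witt map: the Frobenius on $\cO_X$ induces the semilinear Frobenius on $R^1\pi_*\cO_X$, and the compatibility of the de Rham/crystalline $F_C$ with the Hodge projection $\cE_C\twoheadrightarrow\a$ shows that its linearization is precisely $\phi$. Equivalently, $\phi$ being an isomorphism at a closed point $c$ is the transversality statement $\cE_c=\o_c\oplus\a^{(p)}_c$, i.e.\ the Hodge filtration $\o$ and the conjugate sub-bundle $\a^{(p)}$ are opposite in $\cE_c$. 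I would then invoke the classical characterization of ordinarity: an abelian variety over a field of characteristic $p$ is ordinary if and only if Frobenius acts bijectively on $H^1(\cO)$, equivalently the Hodge and conjugate filtrations of its de Rham cohomology are transverse.

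Finally, hypothesis (1) supplies an ordinary fibre $X_c$, so $\phi$ is an isomorphism at $c$. Since the ordinary locus in $C$ is open (it is where the Hasse--Witt determinant is nonzero) and contains $c$, it is dense in the irreducible curve $C$; hence $\phi$ is generically an isomorphism, in particular generically surjective, as claimed. The step I expect to be the main obstacle is the identification in the middle paragraph: one must check carefully, with the conventions of \cite{deJ} for the Hodge versus conjugate filtration, that the composite $\a^{(p)}\hookrightarrow\cE_C\twoheadrightarrow\a$ really is the Frobenius on $R^1\pi_*\cO_X$ (equivalently, that transversality of the two filtrations at $c$ is exactly the ordinarity of $X_c$). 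Once this compatibility is in hand, the openness of the ordinary locus and the rank count are routine.
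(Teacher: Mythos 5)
Your construction of the morphism is the same as the paper's: both realize $\a^{(p)} \ra \a$ as the composite of the conjugate sub-bundle inclusion $\a^{(p)} \cong \im F_C \hookrightarrow \cE_C$ (using that $\ker F_C = \o^{(p)}$ by \cite[Proposition 2.5.2]{deJ}) with the Hodge quotient $\cE_C \ra \a$. But your proof of generic surjectivity takes a genuinely different route. The paper argues through the connection and the maximal Higgs field: the conjugate sub-bundle is generated by horizontal sections $a_i \otimes 1$, and if a nonzero combination of their images vanished in $\a$ it would give a horizontal local section of $\o$, which is impossible when $\theta: \o \ra \a \otimes \O^1_C$ is an isomorphism; hence the images are independent and the map is generically surjective. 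You instead identify the composite with the linearized Hasse--Witt map on $R^1\pi_*\cO_X$ and invoke hypothesis (1) of Theorem \ref{main thm}: at the ordinary fibre $X_c$ the Hasse--Witt map is bijective, and the ordinary locus is open and hence dense in the irreducible curve $C$. Both arguments are correct. The point you flag as the main obstacle --- that the composite really is the Frobenius on $R^1\pi_*\cO_X$ --- is not a gap: it follows from the Frobenius-equivariance of the Hodge projection $\cE_C \ra \a$ together with $F_C(\o^{(p)})=0$. Your version buys something concrete: it makes no use of the maximal Higgs field, which matters because Section \ref{without maximal Higgs field} reuses ``the arguments before \ref{the Higgs field is maximal}'' in the non-maximal case, where the paper's own proof of this lemma as written does not literally apply; the Hasse--Witt argument covers both cases uniformly. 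The paper's version, in exchange, yields surjectivity at every point where the Higgs field is nondegenerate, not only on the ordinary locus.
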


\begin{proof}
Combine the two filtrations in one diagram: 
\[\xymatrix{
&\a^{(p)} \ar[d] \ar@{-->}[dr]^ h & \\
\o \ar[r] & \cE \ar[d] \ar[r] & \a \\
& \o^{(p)}&
}\] For any $x\in C$, let the stalk $\a^{(p)}_x$ generate by $\{a_1 \otimes 1, \cdots, a_{2^m} \otimes 1\}$ where $a_i \in \a_x$.  Since $\nabla(a_i \otimes 1)=0$ and the Higgs field is maximal, $l(a_i \otimes 1)$ are linearly independent in $\a_x$. Otherwise, some section $a \otimes 1 $ is a local section of $\o$ and due to maximal Higgs field, any local section of $\o$ is not horizontal, contradiction. Therefore $a_i \otimes 1$ generate the stalk $\a_x$ generically. Therefore $l$ is generically surjective. 
\end{proof}

\subsection{The Higgs field is nonzero}

\begin{lemma} \label{nonzero degree}
If Higgs field $\theta_1$ of $\cV_1$ is zero, then $\deg \sL_2 \neq 0$.
\end{lemma}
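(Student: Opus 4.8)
The plan is to argue by contradiction: assume $\theta_1=0$ together with $\deg\sL_2=0$, and deduce that $\cV_1$ cannot be irreducible as an isocrystal. First I would reinterpret both hypotheses geometrically. The vanishing $\theta_1=0$ says precisely that the Hodge subbundle $\sL_1=\o\subset\cV_{1|C}$ is horizontal for the reduced Gauss--Manin connection, so that the quotient $\sL_2$ inherits a connection. Independently, the description of the conjugate filtration in \cite[Proposition 2.5.2]{deJ} identifies $\o^{(p)}=\ker(F_C)$, so the Frobenius $F_C\colon\cV_{1|C}^{(p)}\to\cV_{1|C}$ factors through an injection $\bar F\colon\sL_2^{(p)}\hookrightarrow\cV_{1|C}$; composing with the projection $\cV_{1|C}\twoheadrightarrow\sL_2$ yields a map of line bundles $h_1\colon\sL_2^{(p)}\to\sL_2$ on $C$. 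By construction $h_1$ vanishes exactly at the non-ordinary points of $\cV_1$, and since $\cV_1$ is generically ordinary (equivalently, ordinary at the point $c$ of condition (1) in \ref{main thm}), we have $h_1\neq 0$.

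The degree bookkeeping is the heart of the matter. A nonzero map of line bundles on the smooth proper curve $C$ forces $p\,\deg\sL_2=\deg\sL_2^{(p)}\leq\deg\sL_2$, and the vanishing divisor of $h_1$ has degree $\deg\sL_2-\deg\sL_2^{(p)}=(1-p)\deg\sL_2$. Hence $\deg\sL_2=0$ is equivalent to $h_1$ being nowhere zero, i.e. to $\cV_1$ being ordinary at \emph{every} point of $C$. (As a consistency check, since $\wedge^2\cV_1$ is an $F$-crystal of slope $1$, the relation $FV=p$ forces $\deg(\sL_1\otimes\sL_2)=0$, so $\deg\sL_1=-\deg\sL_2$.) The plan is then to show that everywhere-ordinarity is incompatible with irreducibility. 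If $\cV_1$ is ordinary at every point, its Newton polygon is constant with slopes $\{0,1\}$, so by the slope-filtration theorem (Grothendieck specialization together with Katz's horizontality of the slope filtration) the unit-root part is a genuine sub-isocrystal $U\subsetneq\cV_1$ over all of $C$. Under the Tannakian dictionary of \ref{T and iso} this is a nonzero proper subrepresentation, contradicting the fact that $\cV_1$ corresponds to the standard $2$-dimensional representation of $H_1=SL(2)$, which is irreducible. This contradiction gives $\deg\sL_2\neq 0$.

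Here the hypothesis $\theta_1=0$ enters to make the sub-isocrystal transparent: combined with the horizontality of $U$, it exhibits over the now everywhere-ordinary curve a splitting $\cV_{1|C}=\sL_1\oplus U$ into two horizontal line subbundles that is moreover compatible with $F_C$ (since $F_C$ kills $\sL_1^{(p)}$ and carries $U^{(p)}$ isomorphically onto $U$), which is what makes $U$ visibly a subobject and not merely a feature of the reduction. I expect the main obstacle to be exactly this promotion from the reduction mod $p$ to the crystal/isocrystal level: one must verify that the $\nabla$-horizontal, $F_C$-stable decomposition lifts to a splitting of $F$-isocrystals, equivalently that the unit-root filtration is $F$-stable and not only $\nabla$-stable, so that the contradiction with irreducibility is genuine. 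The remaining ingredients---the degree count for $h_1$, the determinant relation, and the identification of the zero locus of $h_1$ with the supersingular locus---are routine.
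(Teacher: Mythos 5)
Your argument is correct, and its engine is the same as the paper's: both construct the Frobenius-induced map of line bundles $l=h_1\colon \sL_2^{(p)}\to\sL_2$ out of the Hodge and conjugate filtrations, check it is nonzero, and use the degree count $\deg\sL_2-p\deg\sL_2\ge 0$ to see that $\deg\sL_2=0$ would force $l$ to be an isomorphism, i.e.\ would make $\cV_1$ (equivalently $X/C$) ordinary at every point of $C$. You differ in two places. For the nonvanishing of $l$ you appeal directly to generic ordinarity of $\cV_1$, which is cleaner than the paper's detour through $\p_1=0$ (forced by $\theta_1=0$ together with the maximal Higgs field of $\cE$) and the nonvanishing of $\p_2$. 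More substantially, your terminal contradiction is Tannakian: everywhere-ordinarity produces a rank-one unit-root subcrystal of $\cV_1$ --- by Katz's slope filtration in the constant-Newton-polygon case, or equivalently by applying $\DD$ to the connected--\'etale sequence of the height-two BT group attached to $\cV_1$ via \ref{equiv on curve} --- and this proper nonzero sub-isocrystal contradicts the irreducibility of $\cV_1$ coming from condition (2) of \ref{main thm}. The paper instead argues geometrically: the proper curve $C$ cannot lie in the ordinary locus of $\cA_{2^m,1}\otimes k$, the non-ordinary locus being an ample divisor. Your route is local to $\cV_1$, does not actually use $\theta_1=0$, and hence proves the stronger unconditional statement for an irreducible, generically ordinary rank-two factor; the paper's route is more elementary, trading the slope-filtration theorem for a global fact about $\cA_{g,1}$. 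The step you flag as the main worry --- promoting the horizontal, $F$-stable unit-root line mod $p$ to a genuine sub-isocrystal --- is precisely what Katz's theorem supplies once the Newton polygon is constant, so it is not a gap.
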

\begin{proof}
We prove it by contradiction. Consider the Hodge and conjugate filtration of $(\cV_1)_{C}$: 
\[\xymatrix{
&\sL^p_2 \ar[d]^F \ar[dr]^l \\
\sL_1 \ar[r] & (\cV_1)_{C} \ar[r] \ar[d]^V & \sL_2 \\
& \sL^p_1 .\\
}
\] Choose any open affine subset $U\subset C$ over which $\sL_1$ is free. Let $t$ be a generator of $\sL_1$ over $U$. Then if $l(t)=0$, then $F(t)\in \sL_1(U)$. Since $\p_1=0$, then for any section $s\in \cT(U)$, $\p(t \otimes s)=0.$ However, $\p(t\otimes s)=\p^{(p)}_2(t \otimes s)$ and hence $\p_2$ is zero over $U$. Since $\im \p_2$ is torsion-free, $\p_2=0$ globally, contradiction. Thereby $l$ is injective and it induces an injection $\cO_{C} \hookrightarrow \sL^{1-p}_2$. In particular, $\sL^{1-p}_2$ is effective. If $\deg \sL_2=0$, then $\sL^{1-p}_2\cong\cO_{C}$ and hence $l$ is an isomorphism. 

Note $\cT$ is a unit root crystal. The isomorphism $l$ induces isomorphism $\sL^p_2 \otimes \cT^{(p)} \ra \sL_2 \otimes \cT$. It implies $Y$ is everywhere ordinary over $C$. However, $C$ is proper which can not be contained in the ordinary locus of $\cA_{2^m,1,k}$.
\end{proof}

Since $\deg (\cV_1)_{C}=0$, we have $\deg \sL_1 \neq 0$ as well.

\begin{prop} \label{Frobenius pullback}
If $\theta_1=0$, then there exists a rank 2 Dieudonne crystal $\cV$ such that $\cV_1=\cV^\s$ and the $\cV_1 \otimes \cT \ra \cE$ factors through the Frobenius map $F_\cV: \cV_1\otimes \cT \ra \cV \otimes \cT $ of $\cV\otimes \cT$.
\end{prop}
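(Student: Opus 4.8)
The plan is to separate the statement into two tasks: constructing the rank $2$ Dieudonné crystal $\cV$ with $\cV^\s\cong\cV_1$, and then checking that the given isogeny $\psi\colon\cV_1\otimes\cT\to\cE$ factors through the Frobenius of $\cV\otimes\cT$. The guiding principle is that a generically ordinary Dieudonné crystal with vanishing Higgs field is a Frobenius pullback. Indeed, $\theta_1=0$ says exactly that the Hodge sub-bundle $\sL_1\subset(\cV_1)_C$ is horizontal for the Gauss--Manin connection, and by de Jong's identification of the conjugate filtration with the Frobenius pullback of the Hodge filtration (\cite[2.5.2]{deJ}), a horizontal Hodge filtration is precisely what allows the Frobenius--Verschiebung structure to descend through the absolute Frobenius $\s\colon C\to C$. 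As a numerical sanity check, horizontality endows $\sL_1$ with an integrable connection, so that $p\mid\deg\sL_1$; this is forced by $\cV_1\cong\cV^\s$, since $\s^*$ multiplies the degree of a line bundle by $p$.

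First I would construct $\cV$. Passing to Barsotti--Tate groups through \ref{equiv on curve}, write $\cV_1=\DD(G_1)$, so that $\theta_1=0$ is the vanishing of the Kodaira--Spencer map of $G_1$. Because the relative Frobenius kills differentials, any $\s$-pullback of a BT group has zero Kodaira--Spencer; the content here is the converse, namely that $G_1\cong\s^{\ast}H$ for some BT group $H$. Using the horizontal Hodge filtration one descends the triple $(\cV_1,F_1,V_1)$ along $\s$ to obtain $H$, sets $\cV:=\DD(H)$, and then verifies that $\cV$ is again a rank $2$ Dieudonné crystal --- slopes in $[0,1]$, with Verschiebung $V_\cV$ satisfying $F_\cV V_\cV=V_\cV F_\cV=p$ --- together with an isomorphism $\cV^\s\cong\cV_1$. \textbf{I expect this Frobenius descent to be the main obstacle:} one must descend not merely the filtered bundle but the entire $F$--$V$ structure, and establish effectivity globally over the proper curve $C$. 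The leverage is that after descent the Hodge filtration of $\cV_1$ is identified with the $\s$-pullback of that of $\cV$, which is automatic once $\sL_1$ is horizontal.

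Granting $\cV$, the factorization is a clean integrality argument that uses the maximal Higgs field hypothesis of Section~\ref{Higgs}, under which $\theta$ is an isomorphism. From the commutative square \eqref{Higgs diagram} one reads $\theta\circ\psi_1=(\psi_2\otimes\mathrm{id})\circ(\theta_1\otimes\mathrm{Id})$; with $\theta_1=0$ the right-hand side vanishes, and injectivity of $\theta$ gives $\psi_1=0$. Since $\psi$ respects Hodge filtrations, this means $\psi_C$ annihilates $\sL_1\otimes\cT_C$. Now let $V_\cV\colon\cV\otimes\cT\to\cV_1\otimes\cT$ denote the Verschiebung of $\cV\otimes\cT$ (well defined because $\cT$ is unit root, so $F_\cT$ is an isomorphism), and define $\rho:=\tfrac1p\,\psi\circ V_\cV$ in the isogeny category. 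Then $\rho\circ F_\cV=\tfrac1p\,\psi\circ(V_\cV\circ F_\cV)=\psi$, so it only remains to show that $\rho$ is an honest morphism of crystals, i.e. that $\psi\circ V_\cV$ is divisible by $p$. I would verify this on the reduction modulo $p$: one has $\im V_{\cV,C}\subseteq\ker F_{\cV,C}=\omega_{\cV\otimes\cT}^{(p)}=\sL_1\otimes\cT_C$, using $\cV^\s\cong\cV_1$ and $\omega_\cV^{(p)}=\omega_{\cV_1}=\sL_1$. Since $\psi_C$ kills $\sL_1\otimes\cT_C$, the composite $\psi_C\circ V_{\cV,C}$ vanishes, so $\psi\circ V_\cV\equiv0\pmod p$; hence $\rho$ is integral and $\psi=\rho\circ F_\cV$ factors through $F_\cV$, as required.
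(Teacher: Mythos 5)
Your proposal follows essentially the same route as the paper: derive $\psi_1=0$ from the Higgs-field diagram \eqref{Higgs diagram}, obtain $\cV$ with $\cV^\s\cong\cV_1$ from the vanishing of the Kodaira--Spencer map, and factor $\psi$ as $\tfrac1p(\psi\circ V_\cV)\circ F_\cV$ after checking $p$-divisibility. The one step you flag as the main obstacle --- that a Dieudonn\'e crystal with trivial Higgs field is a Frobenius pullback --- is exactly what the paper disposes of by citing \cite[Theorem 6.1]{Xia}, and your conjugate-filtration verification that $\psi\circ V_\cV\in p\cE$ correctly fills in the paper's one-line assertion of the same fact.
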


\begin{proof}
The Higgs field of $\cV_1$ is trivial and then $\p_1=0$. By (\cite[Theorem 6.1]{Xia}), $\cV_1$ is the Frobenius pullback of some Dieudonne crystal $\cV$.  Let $F_\cV, V_\cV$ denote the Froebnius and Verschiebung of $\cV$.  Since $\p_1=0$, the image of $\p \circ V_\cV$ is contained in $p\cE$.  Thereby $\frac{\p\circ V_\cV}{p}: \cV \otimes \cT \ra \cE$ is a well defined morphism. Since $\frac{\p\circ V_\cV}{p} \circ F_\cV=\p$, $\p$ factors through $F_\cV$: 
\[\xymatrix{
&\cV \otimes \cT \ar[dr]&\\
\cV_1 \otimes \cT \ar[ur]^F \ar[rr]^\p&& \cE.
}\]\end{proof}

\begin{rmk} \label{nonzero}
 If the Higgs field of $\cV$ is still trivial, then we repeat the process of \ref{Frobenius pullback}. Note the Hodge filtration $\sL \subset (\cV)_{C}$ of $\cV$ satisfies $\sL^p=\sL_1$.  Since by \ref{nonzero degree} $\deg \sL_1 \neq 0$, $|\deg \sL| < |\deg \sL_1|$. After a finite steps, the degree of the sub line bundle from the Hodge filtration is no longer divisible by $p$ and hence the Gauss Manin connection no longer preserves the Hodge filtration. The Higgs field is therefore nonzero. Then we can assume $\cV \otimes \cT$ with nontrivial Higgs field $\theta=\theta_\cV \otimes \text{id}_\cT$. 
\end{rmk}

\subsection{The Higgs field is maximal} \label{the Higgs field is maximal}
In the following, we will show the Higgs field $\theta$ of $\cV \otimes \cT$ is isomorphic. 

Let $0\ra \sL \ra (\cV)_{C} \ra \sL' \ra 0$ be the Hodge filtration of $(\cV)_{C}$.  By \ref{equiv on curve}, the Dieudonne crystal $\cV \otimes \cT$ corresponds to a BT group $G\otimes U$ over $C$ where $G$ is a height 2, generically ordinary BT group with nontrivial Kodaira Spencer map and $U$ is a height $2^m$ etale BT group. The BT groups associated to $\cV \otimes \cT$ and $\cE$ are isogenous and therefore $G \otimes U$ also comes from an abelian scheme $Y$ over $C$ which is isogenous to $X$. Let \[f: Y \ra X\] be the isogeny.

Note $Y[p^\infty] \cong G \otimes U$.  Let \[K=\ker f.\]	
 Then $K$ is a finite flat group scheme. Let $G[p^n] \otimes U[p^n]$ be the smallest truncated BT group containing $K$. 

Then we can choose a finite etale covering of $C$ such that the pullback of $U[p^n]$ is trivial. Then the pullback of $Y[p^n]$ is $G[p^n]^{\times 2^m}$ and $Y$ has maximal Higgs field if and only if the pull back of $Y$ does. For simplicity, we still denote the etale covering as $C	$.

\begin{lemma} \label{G_n}
For any $n$, any nontrivial subgroup scheme of $G_{n,\eta}$ contains $\m_p$.
\end{lemma}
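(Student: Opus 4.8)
The plan is to study a nontrivial subgroup scheme $H\subseteq G_{n,\eta}$ via the connected--étale sequence and to exclude a nontrivial étale part using the nonzero Higgs field. Since $G$ is ordinary at $\eta$, the $p$-divisible group $G_\eta$ is an extension of an étale group by a multiplicative one, so $G_{n,\eta}$ sits in
\[
0\ra \m_{p^n}\ra G_{n,\eta}\ra \Gamma_n\ra 0,
\]
with $\Gamma_n$ étale and connected part $G_{n,\eta}^{0}=\m_{p^n}$. I would first dispose of the connected case: the connected part $H^{0}=H\cap\m_{p^n}$ is a subgroup scheme of $\m_{p^n}$, hence isomorphic to $\m_{p^i}$ for some $i$; if $H^{0}\neq 0$ then $i\geq 1$ and $\m_p\subseteq H^{0}\subseteq H$, which is exactly the assertion.

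It then remains to exclude the case $H^{0}=0$, i.e. $H$ étale and nontrivial. Replacing $H$ by $H\cap G[p]_\eta$ (nonzero, since $H$ is a nontrivial finite $p$-group scheme) I obtain a nontrivial étale subgroup of $G[p]_\eta$ whose intersection with the connected subgroup $\m_p$ is trivial. As $|G[p]_\eta|=p^2$ and $\m_p\subseteq G[p]_\eta$, this subgroup $H'$ has order exactly $p$ and $G[p]_\eta=\m_p\times H'$, so the connected--étale sequence of $G[p]_\eta$ splits. Spreading $H'$ out, it extends to a finite flat subgroup scheme over a dense open $U\subseteq C$, and after shrinking $U$ into the ordinary locus it is étale of order $p$ with trivial intersection with $\ker(F)=\m_p$. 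Hence $G[p]|_{U}\cong\m_p\times H'$ as a product of two order-$p$ subgroup schemes.

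Now I would invoke the crystalline Dieudonné equivalence \ref{equiv on curve}: the splitting $G[p]|_{U}\cong\m_p\times H'$ gives a decomposition of $\cV_C|_{U}$ as a direct sum of two rank-one Dieudonné crystals, each of which is therefore horizontal for the Gauss--Manin connection. Since the Hodge filtration line $\sL$ is the cotangent space of the connected factor, it coincides with the multiplicative summand $\DD(\m_p)$ and is in particular horizontal, i.e. $\nabla(\sL)\subseteq\sL\otimes\O^1_C$ over $U$; equivalently the Higgs field $\theta_\cV\colon\sL\ra\sL'\otimes\O^1_C$ vanishes on $U$. As $\theta_\cV$ is a nonzero map of line bundles (Remark \ref{nonzero}) and $U$ is dense, this is a contradiction, so no étale $H'$ exists and the lemma follows.

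The main obstacle is the passage from a subgroup over the generic point to the statement about the connection. I must ensure that spreading $H'$ out yields a genuine finite flat subgroup scheme over a dense open contained in the ordinary locus, so that $\ker F=\m_p$ and the product decomposition $G[p]|_{U}\cong\m_p\times H'$ hold; and, under \ref{equiv on curve}, that the Hodge line is correctly identified with the horizontal multiplicative summand. Matching the conventions for $F$, $V$ and the cotangent filtration in the contravariant Dieudonné theory is the delicate bookkeeping that makes the implication ``splitting of $G[p]|_U$ $\Rightarrow$ $\theta_\cV=0$'' precise.
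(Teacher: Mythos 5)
Your proof is correct, but it takes a genuinely different and more self-contained route than the paper. The paper reduces to the $p$-torsion level by pushing $H$ forward along $p^{n-1}\colon G_{n,\eta}\ra G_{1,\eta}$ and then cites \cite[Proposition 2.5.2]{Xia} as a black box for the statement that any nontrivial subgroup of $G_{1,\eta}$ contains $\m_p$; you instead reduce via the connected--\'etale dichotomy (intersecting $H$ with $\m_{p^n}$, resp.\ with $G[p]_\eta$) and then actually \emph{prove} the key input: a nontrivial \'etale subgroup of $G[p]_\eta$ would split the connected--\'etale sequence of $G[p]$ over a dense open $U$, hence split $\DD(G[p])|_U=\cV_C|_U$ as a sum of crystals with the Hodge line equal to the horizontal multiplicative summand, forcing $\theta_\cV|_U=0$ and hence $\theta_\cV=0$ on the integral curve $C$, contradicting Remark \ref{nonzero}. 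This buys transparency --- it makes visible that the nonvanishing of the Higgs field is exactly what the lemma rests on --- at the cost of the spreading-out bookkeeping, which you handle correctly (one should shrink $U$ so that $H'\ra G[p]^{\et}$ is an isomorphism over all of $U$, not just at $\eta$, but that is routine). Two harmless imprecisions you share with the paper: the connected part of $G_{n,\eta}$ is a priori only a form of $\m_{p^n}$ over the non-perfect field $k(C)$ (its unique order-$p$ subgroup is still what the lemma means by $\m_p$), and the summand $\DD(G[p]^0)$ is a crystal killed by $p$ rather than a ``Dieudonn\'e crystal'' in the paper's locally free sense; neither affects the argument.
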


\begin{proof}
Let $H$ be a nontrivial subgroup scheme of $G_{n,\eta}$.  We can reduce $n$ so that $H$ is not contained in $G_{n-1}$.  
\[\xymatrix{
&H \ar@{^{(}->}[d] \ar[dr] & \\
G_{n-1,\eta} \ar[r] & G_{n,\eta} \ar[r]^{p^{n-1}} & G_{1, \eta}.\\
}\]
Then $H$ has a nontrivial image in $G_{1,\eta}$ which is flat (over the field). By \cite[Proposition 2.5.2]{Xia}, this image contains $\m_p$. Note $p^{n-1}$ is an endomorphism of $H$. Thus $\m_p \subset H$. 
\end{proof}

Now we prove that $K \cap (1^{\times {2^m-1}} \times G_n )=1$.  The key observation is that since $\dim \Lie K \leq {2^m-1}$( see the proof of Theorem \ref{first three}), by changing of coordinates, we can make $K_\eta$ be contained in the product of first three factors $G^{\times {2^m-1}}_{n,\eta}$ of $G^{\times 2^m}_{n, \eta}$.

Since $G$ is generically ordinary, the filtration 
\[0 \ra G_{n,\eta,\mult} \ra G_{n, \eta} \ra G_{n, \eta, \et} \ra 0.\] Since the group schemes are defined over a field, we may assume the multiplicative part is isomorphic to $\m_{p^n}$, the etale part isomorphic to $\Z/p^n\Z$. Since over a field, all the group scheme involved are automatically flat. We start with the following proposition

\begin{prop} \label{m}
Over $k$, for any $n$, if $H$ is a subgroup of $\m^{\times {2^m}}_{p^n}$ with $\dim \Lie H \leq {2^m-1}$, then via changing of coordinates, $H$ is contained in  the product of first $2^m-1$ factors $\m^{\times {2^m-1}}_{p^n}$.
\end{prop}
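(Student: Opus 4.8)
The plan is to dualize. Over the field $k$, Cartier duality gives a contravariant anti-equivalence $N\mapsto D(N)=\Spec k[N]$ between finite abelian $p$-groups and finite multiplicative $k$-group schemes, with $D(\Z/p^n\Z)=\m_{p^n}$; it is exact, so closed subgroup schemes correspond to quotients of character groups and every closed subgroup of a diagonalizable group is again diagonalizable. Set $R=\Z/p^n\Z$, $d=2^m$, and $\Lambda=X^*(\m_{p^n}^{\times d})=R^d$. Then the inclusion $H\hookrightarrow \m_{p^n}^{\times d}$ is dual to a surjection $q\colon\Lambda\twoheadrightarrow M$ onto $M=X^*(H)$; automorphisms of $\m_{p^n}^{\times d}$ (the \emph{changes of coordinates}) correspond to $GL_d(R)$ acting on $\Lambda$; and the subgroup $\m_{p^n}^{\times(d-1)}$ of the first $d-1$ factors is dual to the projection $\Lambda\to\Lambda/Re_d\cong R^{d-1}$ dropping the last coordinate. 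Thus the conclusion to be proved becomes: there exists $g\in GL_d(R)$ such that $q\circ g$ annihilates the last basis vector $e_d$, equivalently $\ker q$ contains a vector that extends to a basis of $\Lambda$.

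Next I would translate the Lie-algebra hypothesis. Writing $M=\bigoplus_i \Z/p^{a_i}\Z$ we have $D(M)=\prod_i \m_{p^{a_i}}$, and since $\dim_k\Lie \m_{p^a}=1$ for every $a\ge 1$, it follows that $\dim_k\Lie H=\dim_{\F_p}M/pM$, the minimal number of generators of $M$. So the assumption $\dim\Lie H\le d-1$ says exactly that $M$ needs at most $d-1$ generators. Reducing $q$ modulo $p$ yields a surjection $\bar q\colon \F_p^d\twoheadrightarrow M/pM$ onto a space of dimension $\le d-1$, whence $\ker\bar q\neq 0$.

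The crux is to upgrade a nonzero element of $\ker\bar q$ to a primitive element of $\ker q$. I would choose $0\neq\bar v\in\ker\bar q$, lift it to $v_1\in\Lambda$ (automatically having a unit coordinate, hence part of a basis), and extend to a basis $v_1,\dots,v_d$ of $\Lambda$. By choice $q(v_1)\in pM$, so $M=R\,q(v_1)+q(\langle v_2,\dots,v_d\rangle)\subseteq pM+q(\langle v_2,\dots,v_d\rangle)$, and Nakayama's lemma forces $q(\langle v_2,\dots,v_d\rangle)=M$. Hence I can solve $q(w)=q(v_1)$ with $w\in\langle v_2,\dots,v_d\rangle$ and set $v_1'=v_1-w\in\ker q$; the tuple $v_1',v_2,\dots,v_d$ is still a basis. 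Taking $g\in GL_d(R)$ with $g(e_d)=v_1'$ makes $q\circ g$ kill $e_d$, which by the dictionary above is precisely the statement that the corresponding automorphism carries $H$ into $\m_{p^n}^{\times(d-1)}$. I expect the main obstacle to be exactly this last refinement: a naive lift of $\bar v$ only lands in $pM$ rather than $0$, and it is the Nakayama surjectivity of $q$ restricted to the remaining basis vectors that lets me correct it into $\ker q$ without destroying primitivity.
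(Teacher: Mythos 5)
Your proof is correct, and it takes a genuinely different route from the paper's. The paper argues entirely on the group-scheme side by induction on $n$: the base case uses the simplicity of $\m_p$, and the inductive step is a hands-on analysis of $H\cap \m_{p^{k-1}}^{\times d}$, the projection to the last factor, and explicit unipotent automorphisms congruent to the identity mod $p^{k-1}$. You instead pass through Cartier duality to the statement that a surjection $q\colon (\Z/p^n\Z)^d\twoheadrightarrow M$ onto a module needing at most $d-1$ generators has a unimodular vector in its kernel, which you establish by lifting a nonzero element of $\ker(\bar q\colon \F_p^d\to M/pM)$ and correcting it into $\ker q$ via Nakayama; each step (the dictionary between closed subgroups of diagonalizable groups and quotients of the character module, the identification $\dim_k\Lie H=\dim_{\F_p}M/pM$, the unimodularity of any lift of a nonzero mod-$p$ vector, and the preservation of the basis under the unipotent correction) checks out. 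Your argument is shorter, avoids the induction on $n$ entirely, and makes transparent that the Lie-algebra bound is exactly the hypothesis needed; it also generalizes immediately to arbitrary $d$ and, by iteration, to cutting $H$ into the first $d-r$ factors when $\dim\Lie H\le d-r$. What the paper's more pedestrian approach buys is that its coordinate-change mechanics are reused almost verbatim in the subsequent step (Proposition \ref{first three}), where the ambient group $G_n^{\times 2^m}$ is no longer diagonalizable and one can only act through $\Aut(\m_{p^n})$ via Lemma \ref{auto surjection} --- though since that reduction funnels everything back to the $\m_{p^n}^{\times 2^m}$ case anyway, your proof slots in as a drop-in replacement.
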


\begin{proof}
We consider the case $\dim \Lie H ={2^m-1}$ and we prove it by induction. 

If $n=1$, then $H \subset \m^{\times {2^m}}_p$ such that $H$ has height ${2^m-1}$. Since $\m_p$ is simple, $H\cong \m^{\times {2^m-1}}_p$ and one of projections of $H$ to the product of any $2^m-1$ factors $H \ra \m^{\times {2^m-1}}_p$ is injective.  Assume the projection to the first $2^m-1$ factors is isomorphic. Then by change of coordinates, we can put $H = \m^{\times {2^m-1}}_p\subset \m^{\times {2^m}}_p$.

If the proposition is true for $k-1$, then for $n=k$, after changing of coordinates, we have $H\cap \m^{\times {2^m}}_{p^{k-1}}\subset \m^{\times {2^m-1}}_{p^{k-1}}$. Let pr$_{2^m}$ be the projection of $H$ to the $2^m-$th factor and $K_{2^m}=\ker \text{pr}_{2^m}$
\[K_{2^m} \hookrightarrow H \xrightarrow{\text{pr}_{2^m}} \m_{p^k}.\]
We may assume pr$_{2^m}$ is nontrivial. Obviously $K_{2^m}\subset \m^{\times {2^m-1}}_{p^k}$. Since $pH \subset H\cap \m^{\times {2^m}}_{p^{k-1}}\subset \m^{\times {2^m-1}}_{p^{k-1}}$, $\im \text{pr}_{2^m} \subset \m_p$. And $p^{k-1} K_{2^m} \subset \m^{\times {2^m-1}}_p$ is a flat sub group scheme with height $h$. Since pr$_{2^m}$ is nontrivial, $h\leq 2^m-2$. Note the reduction 
\[\Aut(\m^{\times {2^m-1}}_{p^k}) \cong GL_{2^m-1}(\Z/p^k\Z) \ra \Aut(\m^{\times {2^m-1}}_p)\cong GL_{2^m-1}(\F_p)\] is surjective. Thus we can apply some automorphism of $\m^{\times {2^m-1}}_{p^k}$ to implement the coordinate change so that we can make $p^{k-1} K_{2^m}$ be exactly the product of first $h$ factors $\m^{\times h}_p$. 

Consider the following diagram
\[\xymatrix{
&H \ar[r] \ar@{^{(}->}[d] & H/H\cap \m^{\times {2^m}}_{p^{k-1}} \ar@{^{(}->}[d]\\
\m^{\times {2^m}}_{p^{k-1}} \ar[r] & \m^{\times {2^m}}_{p^k} \ar[r]^{p^{k-1}} & \m^{\times {2^m}}_p .\\
}\]
Since $H\cap\m^{\times {2^m}}_{p^{k-1}}= K_{2^m} \cap \m^{\times {2^m-1}}_{p^{k-1}}$, the morphism pr$_{2^m}$ factors through $H/ H\cap\m^{\times {2^m}}_{p^{k-1}}$ and hence $\m^{\times {2^m}}_p \supset H/H\cap\m^{\times {2^m}}_{p^{k-1}} \ra \m_p $ surjectively, induced by pr$_{2^m}$. The kernel of this surjective map, denoting as pr$'_{2^m}$,  is exactly $K_{2^m}/K_{2^m} \cap \m^{\times {2^m-1}}_{p^{k-1}} \cong \m^{\times h}_p$. Therefore the graph of pr$'_{2^m}$ is a height 1 subgroup scheme in $\m^{\times ({2^m}-h)}_p$ and pr$'_{2^m}$ is actually the last factor projection of this subgroup scheme. There exists an element in $\Aut(\m^{\times {2^m}}_{p^k}) $ whose reduction mod $p$ transforms the height 1 subgroup scheme to be the first factor in $\m^{\times ({2^m}-h)}_p$, i.e. the  image of pr$_{2^m}$ is trivial. Note the automorphisms involved here are of the form 
$\begin{pmatrix}
1&&&\\
&1&&\\
&&1&\\
p^{k-1} a&p^{k-1}b&p^{k-1}c&1
\end{pmatrix}$ 
which fixes the first $2^m-1$ factors $\m^{\times {2^m-1}}_{p^k}$. 
Thereby $H$ is contained in $\m^{\times {2^m-1}}_{p^k}$.

For the easier cases $\dim \Lie H <{2^m-1}$, we can adjust above argument accordingly. 
\end{proof}

Now we come back to the original case. Firstly we need the following lemma which relates the \ref{m} and our target: 
\begin{lemma} \label{auto surjection}
The restriction $\Aut(G[p^n]_\eta) \ra \Aut(\m_{p^n})$ is surjective.
\end{lemma}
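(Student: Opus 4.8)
The plan is to realize every automorphism of $\m_{p^n}$ by a multiplication-by-scalar map on $G[p^n]_\eta$. First I would recall that, as noted just above, generic ordinariness of the height $2$ group $G$ yields the connected-\'etale sequence $0\ra \m_{p^n} \ra G[p^n]_\eta \ra \Z/p^n\Z \ra 0$ over the field $\eta$. Since the multiplicative (connected) part of a finite group scheme over a field is canonical and functorial, every automorphism of $G[p^n]_\eta$ preserves $\m_{p^n}$, so the restriction map $\Aut(G[p^n]_\eta) \ra \Aut(\m_{p^n})$ is well defined; it is this map whose surjectivity we must establish.

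Next I would identify the target. By Cartier duality $\m_{p^n}$ is dual to $\Z/p^n\Z$, so $\Aut(\m_{p^n}) \cong (\Z/p^n\Z)^\times$, where a unit $u$ corresponds to the automorphism $x\mapsto x^u$ of $\m_{p^n}\subset \GG_m$. The key observation is that this is precisely the restriction to $\m_{p^n}$ of the multiplication-by-$u$ endomorphism $[u]$ of $\GG_m$, since the group law on $\m_{p^n}$ is inherited from $\GG_m$.

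The heart of the argument is then as follows: for any $u$ prime to $p$, the endomorphism $[u]$ of the $p^n$-torsion group $G[p^n]_\eta$ is an automorphism (its inverse is $[u']$ for any $u'$ with $uu'\equiv 1 \bmod p^n$), and by the previous paragraph its restriction to $\m_{p^n}$ is exactly the automorphism attached to $u$. Hence every element of $\Aut(\m_{p^n}) \cong (\Z/p^n\Z)^\times$ lies in the image of the restriction map, which proves surjectivity.

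I expect the only point requiring care to be the functoriality used to make the restriction map well defined, namely that an arbitrary (not necessarily scalar) automorphism of $G[p^n]_\eta$ carries $\m_{p^n}$ to itself. This follows from the fact that $\m_{p^n}$ is the connected component of the identity of $G[p^n]_\eta$, which is preserved by any automorphism; the remaining verifications---the description of $\Aut(\m_{p^n})$ and the identification of $[u]|_{\m_{p^n}}$ with the $u$-th power map---are routine.
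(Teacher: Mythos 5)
Your argument is correct and is essentially the paper's own: both realize every element of $\Aut(\m_{p^n})\cong(\Z/p^n\Z)^\times$ as the restriction of the multiplication-by-$u$ automorphism of the $p^n$-torsion group scheme $G[p^n]_\eta$. Your additional check that the restriction map is well defined (via canonicity of the connected part over a field) is a reasonable elaboration of what the paper leaves implicit.
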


\begin{proof}
It follows from $\Aut(\m_{p^n})=(\Z/p^n\Z)^*$ and $(\Z/p^n\Z)^*$ naturally acts on any $p^n$ torsion group scheme, in particular, $G_{n, \eta}.$
\end{proof}

\begin{prop} \label{first three}
By changing coordinates, $K_{\eta}$ is contained in the product of the first $2^m-1$ factors $G^{\times {2^m-1}}_{n,\eta}$.
\end{prop}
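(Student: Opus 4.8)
The plan is to split $K_\eta$ into its multiplicative and étale parts, handle the multiplicative part with Proposition \ref{m}, and then invoke Lemma \ref{G_n} to show that the étale part cannot project nontrivially to the excluded factor. The point is that Proposition \ref{m} only controls subgroups of $\m^{\times 2^m}_{p^n}$, so the real content of this proposition is (i) producing the Lie-algebra bound $\dim\Lie K\le 2^m-1$ that feeds Proposition \ref{m}, and (ii) upgrading the conclusion from the multiplicative part to all of $K_\eta$. Throughout I work over the geometric generic point, where ordinarity of $G$ splits the connected--étale sequence.

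First I would establish the bound $\dim\Lie K\le 2^m-1$ that the surrounding text attributes to this proof. We arranged $\psi\bmod p\ne 0$, so the isogeny $f$ does not factor through the relative Frobenius of $Y$: were $f=g\circ\mathrm{Frob}_Y$, then under the Dieudonné correspondence (Theorem \ref{equiv on curve}) $\psi$ would factor through the crystalline Frobenius of $\DD(Y)$, which upon reduction mod $p$ (together with $FV=p$) would force $\psi\equiv 0\bmod p$. Hence the differential $df\colon\Lie Y\to\Lie X$ is nonzero. Since $\Lie K=\ker(df)$ inside the $2^m$-dimensional space $\Lie Y$, we get $\dim\Lie K=2^m-\operatorname{rank}(df)\le 2^m-1$.

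Next I treat the multiplicative part. The factorwise sequence $0\to\m_{p^n}\to G_{n,\eta}\to\Z/p^n\Z\to 0$ identifies the connected part of $G^{\times 2^m}_{n,\eta}$ with $\m^{\times 2^m}_{p^n}$, so the multiplicative part $K^0_\eta=K_\eta\cap\m^{\times 2^m}_{p^n}$ is a subgroup of $\m^{\times 2^m}_{p^n}$, and it carries the whole tangent space of $K_\eta$; thus $\dim\Lie K^0_\eta=\dim\Lie K_\eta\le 2^m-1$. Proposition \ref{m} then produces $\gamma_0\in\Aut(\m^{\times 2^m}_{p^n})=GL_{2^m}(\Z/p^n\Z)$ with $\gamma_0(K^0_\eta)\subset\m^{\times 2^m-1}_{p^n}$. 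I lift $\gamma_0$ to $\tilde\gamma\in\Aut(G^{\times 2^m}_{n,\eta})$ restricting to $\gamma_0$ on the multiplicative part by replacing each scalar entry $a_{ij}\in\Z/p^n\Z$ of $\gamma_0$ with the endomorphism $[a_{ij}]\in\End(G_{n,\eta})$; this is the product form of Lemma \ref{auto surjection}, using that $\Z/p^n\Z$ sits in $\End(G_{n,\eta})$ as scalars and restricts to $\End(\m_{p^n})=\Z/p^n\Z$. After applying $\tilde\gamma$ we may assume $K^0_\eta\subset\m^{\times 2^m-1}_{p^n}\subset G^{\times 2^m-1}_{n,\eta}$.

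Finally I would upgrade from $K^0_\eta$ to $K_\eta$ via Lemma \ref{G_n}. Let $\pi$ be the projection of $K_\eta$ onto the last factor $G_{n,\eta}$. On the connected part $\pi(K^0_\eta)=1$, so by functoriality of the connected--étale filtration the image $\pi(K_\eta)$ has trivial connected part, i.e. it is an étale subgroup scheme of $G_{n,\eta}$. But Lemma \ref{G_n} says every nontrivial subgroup scheme of $G_{n,\eta}$ contains the infinitesimal group $\m_p$, which no étale subgroup can; therefore $\pi(K_\eta)=1$ and $K_\eta\subset G^{\times 2^m-1}_{n,\eta}$. I expect the delicate step to be the Lie-algebra bound: pinning down exactly how $\psi\bmod p\ne 0$ forces $df\ne 0$ through the Dieudonné equivalence is where the governing hypothesis enters, and the conventions relating relative Frobenius to $F$ and $V$ must be handled with care. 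The reduction of the étale part through Lemma \ref{G_n} is the clean structural move that makes the purely multiplicative analysis of Proposition \ref{m} sufficient.
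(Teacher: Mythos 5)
Your overall architecture matches the paper's: bound $\dim\Lie K$ by $2^m-1$, feed that bound into Proposition \ref{m} (lifting the change of coordinates from $\m^{\times 2^m}_{p^n}$ to $G^{\times 2^m}_{n,\eta}$ through the $\Z/p^n\Z$-module structure, which is indeed how Lemma \ref{auto surjection} is meant to be used), and then kill the projection to the last factor by noting that its image is an \'etale subgroup scheme of $G_{n,\eta}$ while Lemma \ref{G_n} forces any nontrivial subgroup to contain $\m_p$. Your steps (ii) and (iii) are essentially the paper's proof.

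The gap is in your derivation of $\dim\Lie K\le 2^m-1$. You claim that $f=g\circ\mathrm{Frob}_Y$ would force $\psi\equiv 0\bmod p$. This is false: under the Dieudonn\'e anti-equivalence the relative Frobenius of $Y$ corresponds to the crystalline Frobenius $F$ of $\DD(Y[p^\infty])$, and $F\bmod p$ is \emph{not} zero --- by the relation between Hodge and conjugate filtrations quoted at the start of Section \ref{Higgs}, its kernel is exactly the pulled-back Hodge filtration $\omega^{(p)}$, and on the unit-root part it is an isomorphism. Taking $f=\mathrm{Frob}$ itself on a generically ordinary family gives $\Lie(f)=0$ while $\DD(f)=F\not\equiv 0\bmod p$. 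The correct criterion is that $\Lie(f)=0$ if and only if $\DD(f)\bmod p$ kills the Hodge filtration $\omega_X\subset\cE_C$ (the induced map $\omega_X\to\omega_Y$ is $f^*$ on cotangent spaces), which is strictly stronger than $\DD(f)\equiv 0\bmod p$, so your hypothesis does not yield the conclusion. A secondary problem: the normalization ``$\psi\bmod p\ne 0$'' was arranged in Section \ref{crystal} for the original map $\cV_1\otimes\cT\ra\cE$, but by the time one reaches this proposition $\cV_1$ has been replaced by its Frobenius descents $\cV$ and $\psi$ by $\psi\circ V_{\cV}/p$ (Proposition \ref{Frobenius pullback}, possibly iterated), and you have not checked that the hypothesis survives this replacement. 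The paper instead obtains the Lie bound from Remark \ref{nonzero}: the descent process terminates with a $\cV$ whose Higgs field $\theta_{\cV}$ is nonzero, and the compatibility of Higgs fields in diagram (\ref{Higgs diagram}) then forces $f$ to be nonzero on tangent bundles; you should route your argument through the Higgs field rather than through $\psi\bmod p$.
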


\begin{proof}
Since the Higgs field $\theta_1$ is nonzero by \ref{nonzero}, $f: Y \ra X$ induces a nonzero map on the tangent bundles. Thereby the dimension of $\Lie K$ is at most ${2^m-1}$.
By \ref{m} and \ref{auto surjection}, we can assume $K_{\eta} \cap \m^{\times {2^m}}_{p^n} \subset \m^{\times {2^m-1}}_{p^n}$.
Let pr$_{2^m}$ be the projection $G^{\times {2^m}}_{n,\eta}\ra G_{n, \eta}$ to the $2^m$ factor and $K_{2^m}$ be the kernel of the restriction of $\text{pr}_{2^m}: K_{\eta} \ra G[p^n]_\eta$.
\[\xymatrix{
&K_{\eta} \ar@{^{(}->}[d] &\\
\m^{\times {2^m}}_{p^n} \ar[r] & G^{\times {2^m}}_{n,\eta} \ar[r] & \Z/p^n\Z^{\times {2^m}}.\\
}\]
Obviously $K_{\eta} \cap \m^{\times {2^m}}_{p^n} \supset K_{2^m} \cap \m^{\times {2^m-1}}_{p^{n}}$. Since $K_{\eta} \cap \m^{\times {2^m}}_{p^n} \subset \m^{\times {2^m-1}}_{p^n}$, pr$_{2^m}(K_{\eta} \cap \m^{\times {2^m}}_{p^n})=1$ and hence $K_{\eta} \cap \m^{\times {2^m}}_{p^n} \subset K_{2^m} \cap \m^{\times {2^m-1}}_{p^{n}}$. So $K_{\eta} \cap \m^{\times {2^m}}_{p^n} = K_{2^m} \cap \m^{\times {2^m-1}}_{p^{n}}$

 Thereby on one hand, $K_{\eta}/K_{2^m}$ is a quotient of an etale group scheme $K_{\eta} / (K_{\eta} \cap \m^{\times {2^m}}_{p^n}) \subset (\Z/p^n\Z)^{\times {2^m}}$ by the subgroup $K_{2^m}/(K_{2^m}\cap \m^{\times {2^m-1}}_{p^{n}})$. On the other hand, it is also a subgroup of $G[p^n]_\eta$. By \ref{G_n}, if it is nontrivial, $\m_p \subset K_{\eta}/K_{2^m} $, contradiction to etale. Thus $K_{\eta}=K_{2^m}$ and $\im \text{pr}_{2^m}$ is trivial.
\end{proof}

\begin{prop} \label{trivial intersection}
As subgroup schemes of $G^{\times 2^m}$, we can change the coordinates so that 
\[K \cap (1^{\times {2^m-1}} \times G_n )=1.\]
\end{prop}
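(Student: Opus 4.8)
The plan is to reinterpret the intersection as the kernel of a projection and then to propagate the generic triviality supplied by \ref{first three} to all of $C$, using a global coordinate change to clear the finitely many points where it could fail.

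First I would set $J := K \cap (1^{\times 2^m-1} \times G_n)$ and identify it with the kernel of the homomorphism $\bar q \colon K \to G_n^{\times 2^m-1}$ obtained by restricting to $K$ the projection of $G_n^{\times 2^m}$ onto its first $2^m-1$ factors; thus $J$ is a closed subgroup scheme of the last copy of $G_n$. By \ref{first three}, after the coordinate change made there we have $K_\eta \subset G_{n,\eta}^{\times 2^m-1} \times 1$, so $J_\eta = 1$ and $\bar q_\eta$ is a closed immersion. Since $K = \ker f$ is finite flat over $C$, the scheme-theoretic image $I := \bar q(K)$ is the scheme-theoretic closure of $I_\eta$ in the $C$-flat scheme $G_n^{\times 2^m-1}$, hence finite flat over the Dedekind curve $C$, and $\mathrm{rk}\,I = \mathrm{rk}\,I_\eta = \mathrm{rk}\,K_\eta = \mathrm{rk}\,K$ because $\bar q_\eta$ is injective. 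Consequently $\bar q \colon K \to I$ is a surjection of finite flat group schemes of equal rank over $C$, and the whole statement reduces to showing that $\bar q$ is an isomorphism, equivalently $J = 1$.

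The equal-rank surjection $\bar q$ is an isomorphism away from a finite set of closed points of $C$, namely the zeros of the determinant of the induced injection $\cO_I \hookrightarrow \bar q_* \cO_K$ of locally free $\cO_C$-modules of the same rank; at a remaining point $c$ the fibre $K_c$ meets the last factor $G_{n,c}$ nontrivially. To remove these I would use the coordinate-change freedom: I look for a global automorphism $\alpha \in \Aut_C(G^{\times 2^m})$ that is the identity on the first $2^m-1$ factors and feeds the last factor into them, chosen so that $\alpha(K) \cap (1^{\times 2^m-1}\times G_n)$ is trivial at every such $c$. At each bad point \ref{G_n} forces any nontrivial fibre of $J$ to contain $\m_p$, so the obstruction is concentrated in the multiplicative part, and the surjectivity of the reduction of automorphisms, exactly as exploited in \ref{auto surjection} together with \ref{m}, lets me adjust coordinates over $C$ so as to annihilate the projection of $K$ to the last factor at the finitely many bad points while preserving $J_\eta = 1$. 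Once $\bar q$ is an isomorphism, $J$ is finite flat of rank $1$, hence equal to the identity section.

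The main obstacle is precisely this globalization. The conclusion $J_\eta = 1$ is only generic, and the kernel of a merely generically injective homomorphism of finite flat group schemes need not be flat, so a priori $J$ may acquire vertical components over the special (typically supersingular) points of $C$. The crux is therefore to produce a single coordinate change, defined over all of $C$, that simultaneously clears the intersection with the last factor at every such point; the flatness of the scheme-theoretic image $I$ reduces this to a finite problem, and \ref{G_n}, \ref{m}, and the automorphism-reduction surjectivity of \ref{auto surjection} are the tools I expect to solve it.
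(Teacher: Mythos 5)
Your first paragraph is on the right track, but the second half of your argument opens a gap that the statement does not actually have, and then fails to close it. The globalization step you describe --- finding a single automorphism $\alpha \in \Aut_C(G^{\times 2^m})$ that clears the intersection at the finitely many ``bad'' closed points --- is only sketched, and the tools you invoke to carry it out do not apply there: \ref{m} is a statement about subgroups of $\m_{p^n}^{\times 2^m}$ and \ref{auto surjection} about automorphisms of $G[p^n]_\eta$ over the \emph{generic} point, where $G$ is ordinary and splits as multiplicative times \'etale. The points where your map $\bar q$ could fail to be an isomorphism are precisely the non-ordinary closed points of $C$, where $G_c$ admits no such splitting, so neither lemma gives you the coordinate change you need, and \ref{G_n} (also a generic-fiber statement) does not localize the obstruction to a multiplicative part there. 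As written, the proof is incomplete.

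The gap is, however, a phantom: the obstacle you flag in your last paragraph does not arise, and the paper's proof is one line. Since $K=\ker f$ is finite and flat over the integral curve $C$, its structure sheaf is torsion-free, so $K$ \emph{is} the scheme-theoretic closure of $K_\eta$ inside $G_n^{\times 2^m}$ --- you already use this closure principle, but you apply it to the image $I=\bar q(K)$ instead of to $K$ itself. By \ref{first three}, $K_\eta$ lies in the closed, $C$-flat subgroup scheme $G_n^{\times 2^m-1}\times 1$, whose closure is itself; hence $K\subset G_n^{\times 2^m-1}\times 1$ globally, and therefore
\[K\cap(1^{\times 2^m-1}\times G_n)\subset (G_n^{\times 2^m-1}\times 1)\cap(1^{\times 2^m-1}\times G_n)=1\]
at every point of $C$, with no further coordinate change and no analysis of special fibers. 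There is no ``merely generically injective homomorphism'' whose kernel could go non-flat, because the containment of $K$ in the first $2^m-1$ factors is a closed containment over all of $C$, not just over $\eta$.
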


\begin{proof}
By \ref{first three}, since $K_{\eta} \subset G^{\times {2^m-1}}_{n,\eta} \subset G^{\times 2^m}_{n,\eta}$ and $K, G_n$ are flat, taking the closure in $G^{\times 2^m}$ gives $K \subset G^{\times {2^m-1}}_n$. 
Thereby globally $K \cap (1^{\times {2^m-1}} \times G_n)=1$.
\end{proof}

\begin{thm}
The Higgs field $\theta$ of $\cV \otimes \cT$ is maximal.
\end{thm}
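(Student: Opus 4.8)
The plan is to reduce the claim to the single rank-two factor $\cV$ and then transport the maximality of the Higgs field of $X$ across the isogeny $f\colon Y\ra X$. Since $\cT$ is a unit root crystal its Hodge filtration is trivial, so the Higgs field $\theta=\theta_\cV\otimes\mathrm{id}_\cT$ of $\cV\otimes\cT$ is an isomorphism if and only if the rank-one map $\theta_\cV\colon\sL\ra\sL'\otimes\O^1_C$ is. By \ref{nonzero} we already know $\theta_\cV\neq 0$, hence it is injective between invertible sheaves on the integral curve $C$, and only its surjectivity remains to be shown.

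First I would extract from the kernel a sub--Barsotti-Tate group of $X[p^\infty]$ carrying exactly $\theta_\cV$. By \ref{trivial intersection} we have, after the coordinate change, $K\cap(1^{\times {2^m-1}}\times G_n)=1$. Hence the inclusion $\iota\colon G\hookrightarrow G^{\times 2^m}=Y[p^\infty]$ of the last factor satisfies $\iota^{-1}(K)=K\cap(1^{\times {2^m-1}}\times G_n)=1$, so $f\circ\iota\colon G\ra X[p^\infty]$ is a monomorphism of $p$-divisible groups and exhibits $G$ as a sub-BT group with BT quotient $Q=X[p^\infty]/G$. Dualizing the exact sequence $0\ra G\ra X[p^\infty]\ra Q\ra 0$ through the anti-equivalence \ref{equiv on curve} (the crystalline Dieudonné functor being exact) yields a short exact sequence of Dieudonné crystals $0\ra\DD(Q)\ra\cE\ra\cV\ra 0$; that is, $\cV$ is a quotient crystal of $\cE$ (the twist of $\DD(G)$ by the trivial last line of $\cT$ affects neither the statement nor the Higgs field).

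Restricting to $C$, the surjection $\cE_C\twoheadrightarrow\cV_C$ is filtration-preserving, and composing it with $\cV_C\twoheadrightarrow\sL'$ kills $\o$; the resulting map factors through $\a$ and gives a surjection $q_\a\colon\a\twoheadrightarrow\sL'$. Functoriality of the Higgs field under the morphism of crystals (as in the diagram \eqref{Higgs diagram}) then produces the commuting square
\[\xymatrix{
\o \ar[r]^{q_\o} \ar[d]_{\theta_X}^{\cong} & \sL \ar[d]^{\theta_\cV}\\
\a\otimes\O^1_C \ar[r]^{q_\a\otimes\mathrm{id}} & \sL'\otimes\O^1_C,
}\]
in which the left vertical map is an isomorphism because $X\ra C$ has maximal Higgs field. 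Since $q_\a\otimes\mathrm{id}$ is surjective and $\theta_X$ is an isomorphism, the composite $(q_\a\otimes\mathrm{id})\circ\theta_X=\theta_\cV\circ q_\o$ is surjective, forcing $\theta_\cV$ to be surjective. A surjection of invertible sheaves on the integral curve $C$ is an isomorphism, so $\theta_\cV$, and therefore $\theta=\theta_\cV\otimes\mathrm{id}_\cT$, is an isomorphism, which is the assertion.

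The main obstacle I anticipate is not the final degree/surjectivity step, which is immediate once the square is in place, but establishing that $\cV$ genuinely arises as a quotient Dieudonné crystal of $\cE$ carrying the Higgs field $\theta_\cV$: this requires invoking exactness of the crystalline Dieudonné functor for the exact sequence of BT groups, checking that the restriction to $C$ remains a surjection of locally free sheaves, and controlling the harmless unit-root twist coming from the trivialized last line of $\cT$ so that the Higgs field of the last factor $G$ is identified with $\theta_\cV$ up to isomorphism.
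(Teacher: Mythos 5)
Your proof is correct and follows essentially the same route as the paper: both reduce to the rank-two factor because $\cT$ is unit-root, both use Proposition \ref{trivial intersection} to embed a copy of (a truncation of) $G$ into $X[p^\infty]$ through the isogeny $f$, and both transport the maximality of the Higgs field of $X$ across the resulting filtration-preserving map of Dieudonn\'e crystals, the only difference being that the paper phrases the last step as fiberwise nonvanishing of $f^*$ while you phrase it as surjectivity onto the quotient $\cV$. The one imprecision is your identification $Y[p^\infty]=G^{\times 2^m}$: the finite \'etale cover only trivializes $U[p^n]$, so only $Y[p^n]=G[p^n]^{\times 2^m}$ is available (which is why the paper works with the truncated group $G_n$); since your square only involves $\cE_C$ and $\cV_C$ with their Hodge filtrations and Higgs fields, running the argument with $G[p^n]$ in place of $G$ repairs this without any other change.
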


\begin{proof}
Note $f: Y \ra X$ induces a diagram 
\[\xymatrix{
\o \ar[r] \ar[d]^\cong & \sL_1 \otimes \cT  \ar[d]^{\theta\cong \theta_1 \otimes \text{Id}}\\
\a\otimes \O^1_C \ar[r]^{f^*} & \sL_2 \otimes \cT\otimes \O^1_C.
}\]
We have known that $\theta$ is not zero. Since $\theta_1: \sL_1 \ra \sL_2 \otimes \O^1_C$, over each fiber of $C$, $\theta$ is either zero or an isomorphism. It suffices to show $f^*$ is nonzero over each fiber.

Consider the composition
\[G_n \hookrightarrow G\odot H\cong Y[p^\infty] \ra X[p^\infty]\] over $C$ where the first arrow is the embedding to the fourth factor.  By \ref{trivial intersection}, this composition is injective. Therefore $\im f$ contains a generically ordinary height 2 BT$_n$ subgroup of $X[p^\infty] $ over $C$. Such $G_n$ provides nonzero elements in $\im f^*$ over each fiber.
\end{proof}

Now $Y \ra C$ satisfies 
\begin{enumerate}
\item $Y/C$ has the maximal Higgs field,
\item $\DD(Y[p^\infty]) \cong \cV \otimes \cT$ where $\cV$ is a Dieudonne crystal and $\cT$ is a rank $2^m$ unit root crystal.
\end{enumerate}

The last step is based on Serre-Tate theory. For a given ring $R$ and an ideal $I \subset R$, let $R_0 = R/I$. Write $AS(R)$ for the category of abelian schemes over $R$ and write $BT-\Def(R_0, R)$ for the category of triples $(A_0, G, \a)$ where $A_0$ is an abelian scheme over $R_0$, $G$ is a BT group over $R$ and $\a$ is an isomorphism $\a: G_0=G\otimes R_0 \cong A_0[p^\infty]$.

\begin{thm} \cite[Chapter 5, 1.6]{Messing} \label{Serre-Tate}
Let $R$ be a ring in which the prime $p$ is nilpotent, let $I\subset R$ be a nilpotent ideal and write $R_0=R/I$. Then the functor $AS(R) \ra BT-\Def(R, R_0)$ obtained by sending $A$ to the triplet
\[(A_0=A\otimes R_0, A[p^\infty], \mbox{the natural isomorphism $\a$})\] is an equivalence.
\end{thm}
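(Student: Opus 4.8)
The plan is to reduce the statement to a square-zero thickening and then to recognize both sides as being governed by the same crystalline deformation datum. First I would note that the functor is manifestly well defined and functorial: to an abelian scheme $A/R$ it assigns the triple consisting of $A_0 = A\otimes R_0$, the Barsotti--Tate group $A[p^\infty]/R$, and the canonical identification $\alpha\colon (A[p^\infty])\otimes R_0 \cong A_0[p^\infty]$, and these assemble into an object of $BT\text{-}\Def(R,R_0)$. To prove it is an equivalence I would d\'evissage along the nilpotent ideal: writing $I\supseteq I^2 \supseteq\cdots\supseteq I^N=0$ and factoring $R\to R_0$ as a finite tower of square-zero extensions $R/I^{j+1}\to R/I^{j}$, it suffices, since equivalences compose and all the constructions are compatible with base change, to treat a single square-zero extension, i.e. $I^2=0$. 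On such an extension $I$ carries canonical nilpotent divided powers (all of order $\geq 2$ vanish because $I^2=0$) compatible with those on $(p)$, so the Grothendieck--Messing machinery is available.

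For full faithfulness I would separate the two halves. Faithfulness is immediate from rigidity: the Hom scheme of abelian schemes is unramified over the base, so for a nilpotent thickening the reduction $\Hom_R(A,B)\to\Hom_{R_0}(A_0,B_0)$ is injective; since the functor remembers $f_0$, it is faithful (the Barsotti--Tate analogue is Drinfeld's rigidity lemma for $p$-divisible groups). Fullness is the substantive half: given a compatible pair $(f_0,g)$ with $f_0\colon A_0\to B_0$ and $g\colon A[p^\infty]\to B[p^\infty]$, I would realize $f$ by deformation theory. In the square-zero case the set of lifts of $f_0$ to $\Hom_R(A,B)$ is a torsor under a group computed from the liftings of the Hodge filtration in de Rham cohomology, and the lifts of $f_0[p^\infty]$ on the Barsotti--Tate side form a torsor under the same group via $\DD(A_0[p^\infty])\cong H^1_{\mathrm{dR}}(A_0/R_0)$; the datum $g$ then pins down the unique lift $f$ whose restriction to $A[p^\infty]$ is $g$, and faithfulness makes it unique.

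Essential surjectivity is the crystalline heart. Given $(A_0,G,\alpha)\in BT\text{-}\Def(R,R_0)$, the lift $G$ of $A_0[p^\infty]$ corresponds by Grothendieck--Messing to a lift of the Hodge filtration inside $\DD(A_0[p^\infty])$ evaluated on the divided-power thickening $R$. Under the canonical, filtration-compatible isomorphism $\DD(A_0[p^\infty])\cong H^1_{\mathrm{dR}}(A_0/R_0)$, this is exactly a lift of the Hodge filtration in the de Rham cohomology of $A_0$, which by the crystalline deformation theory of abelian schemes is precisely the datum needed to produce a lift $A/R$ of $A_0$ whose associated filtration is the prescribed one. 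One then checks $A[p^\infty]\cong G$ compatibly with $\alpha$, giving the desired preimage, and faithfulness makes it unique up to unique isomorphism; combined with the tower above this yields the general nilpotent case.

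The main obstacle is the crystalline comparison underlying the previous paragraph. The whole argument rests on the assertion that deformations of the abelian scheme and deformations of its Barsotti--Tate group are classified by \emph{the same} filtration-lifting problem, and this requires the compatibility of the Grothendieck--Messing crystal $\DD(A[p^\infty])$ with the de Rham cohomology $H^1_{\mathrm{dR}}(A)$, including the matching of Hodge filtrations and the careful treatment of the divided-power structures. This is exactly the content developed in Messing's book, and it is what makes the theorem deep rather than formal; by contrast the rigidity inputs for faithfulness and the torsor bookkeeping for fullness are comparatively routine once the comparison is in hand.
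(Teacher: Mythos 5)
The paper gives no proof of this statement---it is quoted from Messing's book---so the only meaningful comparison is with the cited source. Your sketch follows the classical crystalline route (d\'evissage to a square-zero, hence canonically nilpotent-PD, thickening; Grothendieck--Messing for the Barsotti--Tate group; the filtered comparison $\DD(A_0[p^\infty])\cong H^1_{\mathrm{dR}}(A_0)$), which is essentially Messing's own argument, and you correctly identify the comparison of crystals as the non-formal ingredient.

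Two steps need repair. First, in the fullness argument you assert that the lifts of $f_0$ to $\Hom_R(A,B)$ form a torsor under a group of filtration liftings; this contradicts the rigidity you invoke one sentence earlier, since there is at most one lift. The correct mechanism is an obstruction, not a torsor: $f_0$ lifts if and only if the induced filtered map on the crystals evaluated at $R$ carries the Hodge filtration determined by the lift of $B$ into that determined by the lift of $A$, and the datum $g$ is exactly what guarantees this; torsor language applies to lifts of objects, not of morphisms. Second, essential surjectivity silently uses that deformations of the abelian scheme $A_0$ itself are classified by lifts of the Hodge filtration in $H^1_{\mathrm{cris}}(A_0)$. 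This cannot be extracted from Grothendieck--Messing for BT groups without circularity---that passage is precisely the theorem being proved---so it must be supplied as an independent input (Grothendieck's unobstructedness of abelian-scheme deformations plus the identification of the tangent space via the universal vector extension, as in Mazur--Messing). With those two points made explicit the sketch is sound; note also that Drinfeld's later proof, exposed by Katz, avoids the crystalline machinery entirely by using only the lemma that $\ker(G(R')\to G(R'_0))$ is killed by a power of $p$ depending only on the nilpotency indices.
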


\begin{thm} \label{the last one}
The family of polarized abelian varieties $Y \ra C$ as above has a unique lifting to $W(k)$ which is a family of abelian fourfolds with maximal Higgs field. 
\end{thm}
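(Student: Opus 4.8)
The plan is to construct the lift $p$-adically by a Serre--Tate/Grothendieck--Messing argument, using the maximal Higgs field both to kill the deformation obstruction and to force uniqueness. Since a smooth proper curve is unobstructed, I first choose compatible lifts $\tilde C_n$ of $C$ over $W_n(k)=W(k)/p^n$; because $p>2$ the ideal $(p)$ carries its standard divided powers, so each $C\hookrightarrow \tilde C_n$ is a PD thickening. Evaluating the crystals $\cV$ and $\cT$ on $\tilde C_n$ produces locally free modules with integrable connection and with the Dieudonne data $F,V$ already in place, so by crystalline Dieudonne theory (\ref{equiv on curve}) the only datum left to specify on $\tilde C_n$ is a lift of the Hodge filtration, and by \ref{Serre-Tate} such a lift returns an abelian scheme. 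As $\cT$ is a unit root crystal, the corresponding BT group $U$ is \'etale and lifts uniquely, contributing nothing to the filtration; thus the whole problem reduces to lifting the height-$2$ group $G$ attached to $\cV$, i.e. to lifting the sub-line-bundle $\sL_1\subset \cV_{|C}$ of its Hodge filtration.

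I would analyze this lifting inductively from $\tilde C_{n-1}$ to $\tilde C_n$. The lifts of $\sL_1$ form, if nonempty, a torsor under $H^0(C,\mathscr{H}om(\sL_1,\sL_2))$, with obstruction in $H^1(C,\mathscr{H}om(\sL_1,\sL_2))$. The maximal Higgs field gives $\theta_1\colon \sL_1\xrightarrow{\sim}\sL_2\otimes\O^1_C$, whence $\mathscr{H}om(\sL_1,\sL_2)\cong (\O^1_C)^{-1}=T_C$. For $g\geq 2$ one has $H^0(C,T_C)=0$, so a lift of $\sL_1$, if it exists, is unique; together with the requirement that the lifted Dieudonne structure be $F$-compatible, this yields the \emph{uniqueness} of the whole family once existence is granted.

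The crux is \emph{existence}, i.e. the vanishing of the obstruction in $H^1(C,T_C)$, and here the freedom in the choice of $\tilde C_n$ enters. The lifts $\tilde C_n$ of $\tilde C_{n-1}$ form a torsor under $H^1(C,T_C)$, and changing $\tilde C_n$ by a class $\delta$ changes the filtration-lifting obstruction by $H^1(\theta_1)(\delta)$, the Kodaira--Spencer map being exactly $H^1$ applied to the Higgs field $\theta_1$. Since $\theta_1$ is an isomorphism, the induced map $H^1(C,T_C)\to H^1(C,\mathscr{H}om(\sL_1,\sL_2))=H^1(C,T_C)$ is an isomorphism, so there is a unique lift $\tilde C_n$ of the curve for which the obstruction vanishes and $\sL_1$ lifts. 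Combining with the \'etale lift $\tilde U$ of $U$, I form the BT group $\tilde G\otimes\tilde U$ over $\tilde C_n$ and apply \ref{Serre-Tate} (valid as $p$ is nilpotent on $W_n(k)$) to obtain $\tilde Y_n\to\tilde C_n$; these are compatible in $n$, assemble into a formal abelian scheme over the completion of $\tilde C$, and algebraize via the lifted principal polarization by Grothendieck's existence theorem, giving $\tilde Y\to\tilde C$ over $W(k)$.

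Finally I would record that the lift has maximal Higgs field: its Kodaira--Spencer map reduces modulo the nilpotent ideal to that of $Y$, which is an isomorphism, and since the relevant modules are locally free this forces the Higgs field of $\tilde Y$ to be an isomorphism as well. The main obstacle I anticipate is exactly the bookkeeping of the third paragraph: establishing rigorously that the dependence of the filtration-lifting obstruction on the choice of curve lift is governed by $H^1(\theta_1)$, so that maximality of the Higgs field makes this matching an isomorphism, and organizing the simultaneous deformation of the base curve and of $\sL_1$ so that the two obstruction classes cancel at each level. By comparison the algebraization step and the transport of the polarization and $F$-structure through the isogeny $f\colon Y\ra X$ are routine.
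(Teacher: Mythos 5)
Your proposal is correct in outline and ends up at the same skeleton as the paper's proof (reduce to the BT group $G\otimes U$ via \ref{equiv on curve}, note that the unit root part lifts for free, lift $G$ together with the curve, then apply \ref{Serre-Tate} and algebraize), but you take a genuinely different route on the one step that carries all the content: the paper simply invokes the main theorem of \cite{Xia}, which says that a height~$2$, generically ordinary BT group over $C$ with isomorphic Kodaira--Spencer map determines a canonical lifting $\tilde C$ of $C$ to $W(k)$ together with a lift of the group, and then invokes \cite[Theorem 7.22]{Xia} to lift the polarization. You instead sketch a direct Grothendieck--Messing argument for that theorem: lifts of the Hodge line $\sL_1$ are obstructed in $H^1(C,\mathscr{H}om(\sL_1,\sL_2))\cong H^1(C,T_C)$, the choice of curve lift moves the obstruction by the cup product with $\theta_1$, and maximality of the Higgs field makes this matching an isomorphism, giving existence and uniqueness simultaneously (with $H^0(C,T_C)=0$, valid since $\deg\Omega^1_C=2\deg\sL_1>0$ forces $g\geq 2$). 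This is almost certainly how the cited theorem is proved, so what you have written is a useful unpacking rather than a divergence; but be aware that the two points you defer are exactly where the work lives. First, the claim that the filtration-lifting obstruction varies by $H^1(\theta_1)(\delta)$ under a change of curve lift by $\delta$ is the entire content of the cited theorem and needs the explicit comparison of crystal evaluations on two PD thickenings via the connection; you correctly flag this but do not prove it. Second, your algebraization step speaks of "the lifted principal polarization": $Y$ is not principally polarized (its polarization has degree $d=2|K|$, being transported through the isogeny $f\colon Y\ra X$), and the fact that this polarization lifts is not automatic --- it is a separate statement, which the paper gets from \cite[Theorem 7.22]{Xia}. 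With those two inputs supplied (by citation or by proof), your argument is complete and equivalent to the paper's.
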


\begin{proof}
By \ref{equiv on curve}, the rank 2 Dieudonne crystal $\cV$ corresponds to a height 2 BT group $G$ and the unit root crystal $\cT$ corresponds to a height $2^m$ etale BT group. By the main theorem in \cite{Xia},  the height 2 BT group gives a lifting $\tilde C$ of $C$ to $W(k)$.  The lifting of $G$ gives a lifting of $Y[p^\infty]$. By \ref{Serre-Tate}, $Y \ra C$ lifts to a formal aeblian scheme over $\tilde C$ over $W(k)$.  Again by \cite[Theorem 7.22]{Xia}, the polarization lifts as well and hence the family $Y \ra C$ lifts to a polarized abelian scheme $\tilde Y \ra \tilde C$ over $W(k)$.

Since the special fiber has maximal Higgs field, so is $\tilde Y \ra \tilde C$. 
\end{proof}

\begin{rmk} \label{the Hodge filtration}
From the proof of \ref{the last one}, the weight 1 variation of Hodge structure $R^1\tilde f_*(\O^._{\tilde Y/\tilde C}) \cong \cV_1 \otimes \cdots \otimes \cV_{m+1}$ has the Hodge filtration coming from $\sL \hookrightarrow \cV_1$. 
\end{rmk}

To show the $\tilde C$ is a Mumford curve, we use Theorem 0.5 in \cite{Zuo}. The family in our case is smooth so that there is no unitary part. Maximal Higgs field implies the family reaches the Arakelov bound. Apply the above theorem and we have $\tilde Y \ra \tilde C$ is isogenous to a Mumford curve. In particular, the generic fiber $(\tilde Y \ra \tilde C)\otimes \C$ is a Shimura curve (with a universal family). From \ref{the Hodge filtration}, there can be more than one copy of Mumford curve $Z$ appearing in the decomposition of $\tilde Y$.  Therefore $(\tilde Y \ra \tilde C)\otimes \C$ is a Mumford curve.  

It finishes the proof of the case with maximal Higgs field. 

\section{Without maximal Higgs field} \label{without maximal Higgs field}

In this part, we will prove without maximal Higgs field, $X \ra C$ is a weak Shimura curve over $k$. Note the arguments before \ref{the Higgs field is maximal} still hold for the non-maximal Higgs field case. So $\cE$ can be decomposed to the tensor product $\cV \otimes \cT$ with $\cV$ a rank 2 Dieudonne crystal and $\cT$ a rank $2^m$ unit root crystal.  Equivalently, there exists a family of abelian varieties $Y \ra C$ such that 
\[Y[p^\infty]=G \odot H\]
 where $G$ is a height 2 BT group and $H$ is a height $2^m$ etale BT group. And the family $Y \ra C$ induces a morphism $\phi: C \ra \cal{A}_{2^m, d}$

Let us introduce the categories $\cC$ and $\hat \cC$, following \cite{Schlessinger}. The objects of $\cC$ are the artinian local algebras $R$ such that $R/\frak{m}_R \cong k$. The morphisms in $\cC$ are the homomorphisms of algebras. Then $\hat \cC$ is defined as the category of complete noetherian local algebras $\cal{R}$ such that $\cal{R}/\frak{m}^i_\cal{R}$ is in $\cC$ for all $i$. Again the morphisms are just the homomorphisms of algebras. Notice that $\cC$ is a full subcategory in $\hat \cC$. We consider the $\cal{R}$ in $\hat \cC$ with their $\frak{m}_\cal{R}-$adic topology; for $R$ in $\cC$ this is just the discrete topology.

For any scheme $Z_0$ over $k$, we define a formal deformation functor $\Def_{Z_0}: \cC \ra \mbox{Sets}$, given by 
\[\Def_{Z_0}(R)=\left\{\begin{aligned}
&\mbox{isomorphism classes of pairs ($Z, \psi$), where $Z$ is an scheme}\\
&\mbox{over $\Spec R$ and $\psi$ is an isomorphism $\psi: Z \otimes k \cong Z_0$} 
\end{aligned}\right\} \]

In particular, we can define deformation functor $\Def_{X_0, \l}$ for an abelian scheme $X_0$ and its polarization $\l$. And we can extend these deformation functors to the category $\hat \cC$ by defining $\Def_*(\cal{R})$ as the projective limit of the $\Def_*(\cal{R}/\frak{m}^i_\cal{R})$. 

Note the functor $\Def_{X_0}$ is isomorphic to $\Hom(T_pX_0 \otimes T_p X^t_0, \hat \GG_m)$ and the functor $\Def_{(X_0, \l)}$ is represented by the formal completion of $\cal{A}_{g,d,n}$ at $X_0$ which is a subformal torus of  $\Hom(T_pX_0 \otimes T_p X^t_0, \hat \GG_m)$. 

\begin{prop} \label{image with maximal Higgs field}
Under the above assumption, if  $Y$ is not isotrivial, then the universal family over $\im \phi$ has maximal Higgs field.
\end{prop}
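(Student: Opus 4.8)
The plan is to exploit the fact that the Kodaira--Spencer map of the universal family over a locally closed subvariety $Z\subseteq\cA_{2^m,d}$ is nothing but the inclusion of tangent spaces $TZ\hookrightarrow T\cA_{2^m,d}|_{Z}=\mathrm{Sym}^2\alpha$, where $\alpha=R^1\pi_{*}\cO$. Proving that its Higgs field is maximal therefore amounts to showing that the tangent line of $Z=\im\phi$ is, at the generic point, a nondegenerate quadratic form on $\omega$; maximality then reads off as the resulting map $\omega\ra\alpha\otimes\O^1_Z$ being an isomorphism.

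First I would record that, since $Y$ is not isotrivial, $\phi$ is nonconstant and $Z=\im\phi$ is a curve. Writing $\nu\colon C\ra Z$ for the induced finite morphism (composed with the normalization of $Z$), we have $Y\cong\nu^{*}A$ for the universal family $A\ra Z$. Pulling back the Higgs field of $A$ and composing with the codifferential $d\nu\colon\nu^{*}\O^1_Z\ra\O^1_C$ gives the factorization $\theta_Y=(1\otimes d\nu)\circ\nu^{*}\theta_A$, and since $\nu$ is a nonconstant map of curves, $d\nu$ is an isomorphism at the generic point. It remains to analyse $\theta_Y$ directly. From the decomposition $\cE\cong\cV\otimes\cT$ valid before \ref{the Higgs field is maximal}, the Hodge filtration of $\cV\otimes\cT$ is $\sL_1\otimes\cT_{|C}\subset(\cV\otimes\cT)_{|C}$ --- the unit-root factor $\cT$ contributing no filtration --- so that $\theta_Y=\theta_\cV\otimes\mathrm{id}_{\cT}$. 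By \ref{nonzero} the line-bundle map $\theta_\cV\colon\sL_1\ra\sL_2\otimes\O^1_C$ is nonzero, hence a nondegenerate rank-one form at the generic point, while the principal polarization equips the rank $2^m$ bundle $\cT_{|C}$ with a perfect pairing. Consequently $\theta_\cV\otimes\mathrm{id}_{\cT}$ is a nondegenerate quadratic form generically, so the Kodaira--Spencer class of $Y$ spans a nondegenerate tangent direction.

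Combining the two computations, $\nu^{*}\theta_A$ is nondegenerate at the generic point, whence $\theta_A\colon\omega_A\ra\alpha_A\otimes\O^1_Z$ is generically an isomorphism. The hard part will be to upgrade this \emph{generic} isomorphism to genuine maximality --- an isomorphism of sheaves --- as required to apply \ref{the last one}. The mechanism is that the zeros of $\theta_\cV$ on $C$, which are precisely what obstructs maximality for $Y/C$, are absorbed into the ramification of $\nu$: in the factorization above, a zero of $\theta_\cV$ at a point $c$ where $d\nu$ also vanishes need not force a zero of $\theta_A$ at $\nu(c)$. To make this precise I would restrict to the height-two sub-Higgs-field $\theta\colon\sL^{A}_1\ra\sL^{A}_2\otimes\O^1_Z$ of $A$ over the smooth proper model of $Z$ and invoke the Arakelov inequality $\deg\sL^{A}_1\le\deg\sL^{A}_2+\deg\O^1_Z$ for a polarized family over a proper curve, equality in which is equivalent to the Higgs field being an isomorphism; the minimality of $Z=\im\phi$, whose tangent line agrees with the Kodaira--Spencer direction at every smooth point and not merely generically, should force equality. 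I expect this degree bookkeeping, together with handling the possibly singular $Z$ and the possible inseparability of $\nu$ by passing to the smooth proper model, to be the principal technical obstacle; granting maximality, the proposition follows at once.
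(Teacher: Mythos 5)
You have correctly isolated the real difficulty and then not resolved it. The generic statement is right and easy: writing $\nu\colon C\ra Z=\im\phi$, the factorization $\theta_Y=(1\otimes d\nu)\circ\nu^*\theta_A$ together with $\theta_Y=\theta_{\cV}\otimes\mathrm{id}_{\cT}\neq 0$ shows the Higgs field of the universal family $A$ over $Z$ is an isomorphism at the generic point. The gap is the passage from generic to everywhere. Your proposed mechanism is the degree equality $\deg\sL^A_1=\deg\sL^A_2+\deg\Omega^1_Z$, but pulling back along $\nu$ gives $\mathrm{div}(\theta_{\cV})=\nu^*\mathrm{div}(\theta_A)+R$ with $R$ the ramification divisor, so the equality you need is \emph{equivalent} to the assertion that the vanishing divisor of $\theta_{\cV}$ on $C$ is exactly $R$ --- i.e.\ that the zeros of the Higgs field of $Y/C$ are precisely absorbed by the ramification of $\nu$. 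That is the proposition restated, not an input; ``minimality of $Z$'' supplies no independent lower bound on $\deg\sL^A_1-\deg\sL^A_2$, and the injectivity of $T_zZ\ra\Hom(\omega,\alpha)$ only says the tangent direction is a \emph{nonzero} homomorphism at each point, not an isomorphism. The issues you flag (inseparability of $\nu$, singularities of $Z$) are also not handled by this bookkeeping.

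The paper closes the gap with a different, purely local tool: Serre--Tate theory. At each closed point $c$ one has $Y_c[p^\infty]\cong G_c\odot H_c$ with $H_c$ \'etale, so every deformation of $(Y_c,\lambda)$ arising from $C$ factors through $\Def(G_c)$, which by Illusie is smooth of dimension one, $\cong k[[t]]$. Non-isotriviality forces $\Spec\hat{\cO}_{C,c}\ra\Def(G_c)$ to be surjective, hence the completed local ring of $\im\phi$ at $\phi(c)$ surjects onto $k[[t]]$ sitting inside $\Def(Y_c,\lambda)$: each branch of $\im\phi$ is the full formal deformation curve of the height-$2$ group, whose universal Higgs field is an isomorphism. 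This yields maximality at \emph{every} point in one stroke (and, as a byproduct, that $\im\phi$ has at worst ordinary singularities). Any rescue of your degree computation would have to prove this local statement first, at which point the computation is superfluous.
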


\begin{proof}

Since $Y\ra C$ is not isotrivial, the morphism $\phi$ is not a point. Then the Higgs field of $\im \phi$ is not zero.

For any closed point $c \in C$, let $G_c$ be the restriction of the height 2 BT group $G$ to the point $c$. Since $Y$ is not isotrivial, $\im(\Spec \hat{\cO}_{C,c} \ra \Def_{G_c})$ has dimension $\geq 1$. By \cite{Ill}, the local formal deformation space $\Def_{G_c}$ over $k$ is one-dimensional and isomorphic to $k[[t]]$. We know $\Spec \hat\cO_{C,c} \ra \Def_{G_c}$ is a surjection. 

Let $\l$ be a polarization on $Y_c$. Then $Y_c$ has two deformation spaces  $\Def_{Y_c}$ and $\Def_{Y_c, \l}$. And there exists a natural embedding $\Def_{(Y_c, \l) }\ra \Def_{Y_c}$.

Since $Y_c[p^\infty]\cong G_c \odot H_c$, by Serre-Tate theory(\ref{Serre-Tate}), a deformation of $G_c$ induces deformation of $Y_c$. Thereby we have a morphism between local deformation space $\Def(G_c) \ra \Def(Y_c)$ which is a local embedding. 

Consider \[\xymatrix{
& \Spec \hat \cO_{C,c} \ar[dr] \ar@{->>}[dl]&\\
\Def(G_c) \ar@{-->}[rr]\ar[dr]&&\Def(Y_c, \l) \ar@{^{(}->}[dl]\\
& \Def(Y_c)&.
}\] Since $\im (\Spec \hat \cO_{C,c} \ra \Def(Y_c))\cong \Def(G_c)$ and $\Def_{(Y_c, \l) }\ra \Def_{Y_c}$ is an embedding, we have a morphism $\Def(G_c) \ra \Def(Y_c, \l)$.  Thereby $\im(\Spec \hat \cO_{C,c} \ra \Def(Y_c, \l)) \cong \Def(G_c)\cong k[[t]]$. Thus $\cO_{\im \phi, c} $ maps surjectively to $\Def(G_c)$ which implies the universal family over $\im \hat \phi$ has maximal Higgs field.
\end{proof}

\begin{rmk} \label{the variation}
Now denote $\im \phi$ as $C'$. 
\begin{enumerate}
\item From the proof of \ref{image with maximal Higgs field}, $C'$ has at worst ordinary singularity.
\item We can apply the \ref{the last one} to the normalization of $C'$ and then obtain $Y \ra C'$ is a semistable reduction of a special Mumford curve in $\cA_{2^m, d, n}\otimes \C$. 
\end{enumerate}
\end{rmk}
Then the following proposition justifies the non-maximal Higgs field case.

\begin{prop} \label{lifting of isogeny}
Notations as \ref{main thm}. If $Y\ra C$ is another family of polarized abelian varieties over $C$ such that 
\begin{enumerate}
\item $Y_c $ is ordinary for some closed point $c\in C$, 
\item the image of $C \ra \cA_{2^m,d,n}\otimes k$ induced by $Y$ is a reduction of a special Mumford curve,
\item $f: Y \ra X$ is an isogeny compatible with polarization.
\end{enumerate}
Then the image of $C \ra \cA_{2^m,1,n}$ induced by $X/C$ is an irreducible component of a reduction of a special Mumford curve.
\end{prop}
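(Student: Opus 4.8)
The plan is to transport the characteristic-zero lift of $Y$ across the isogeny $f$ to a lift of $X$, and then to recognise the resulting family as a Mumford curve. First, by Remark \ref{the variation}(2) (applying Theorem \ref{the last one} to the normalisation of $C'=\im\phi$) there is a lift $\tilde C'$ of $C'$ to $W(k)$ carrying a family $\tilde Y\ra \tilde C'$ whose generic fibre $(\tilde Y\ra\tilde C')\otimes\C$ is a special Mumford curve in $\cA_{2^m,d,n}\otimes\C$. The whole problem then reduces to producing a compatible lift of $X$, i.e.\ to lifting the isogeny $f\colon Y\ra X$, or equivalently its kernel $K=\ker f$, over $\tilde C'$ (after pulling back to a suitable cover of $C$).

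For the lift itself I would argue on the crystalline side. By Theorem \ref{equiv on curve}, $f$ corresponds to an isogeny $\psi\colon \DD(Y)\ra\DD(X)$ of Dieudonn\'e crystals over $\cri(C/W(k))$, injective with $p$-torsion cokernel, which on Hodge filtrations realises the map $\psi_1$ of the diagram \ref{Higgs diagram}. Evaluating the crystals on the divided-power thickening $C\hookrightarrow\tilde C'$ gives a horizontal map $\psi_{\tilde C'}\colon \DD(Y)_{\tilde C'}\ra\DD(X)_{\tilde C'}$ of bundles with connection, an isomorphism after inverting $p$. The lift $\tilde Y$ endows $\DD(Y)_{\tilde C'}$ with its Hodge filtration $\tilde\omega_Y$, and I would take the candidate filtration on $\DD(X)_{\tilde C'}$ to be the saturation of $\psi_{\tilde C'}(\tilde\omega_Y)$. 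By Grothendieck--Messing this filtration defines a lift of $X[p^\infty]$, and since $K$ is isotropic for the Weil pairing attached to the principal polarization, the quotient $\tilde X=\tilde Y/\tilde K$ inherits a principal polarization; that $\tilde X$ is an honest polarized abelian scheme over $\tilde C'$ then follows from Serre--Tate theory (Theorem \ref{Serre-Tate}) together with the polarization lifting of \cite{Xia}.

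Granting the lift, the generic fibre $(\tilde X\ra\tilde C')\otimes\C$ is isogenous to the Mumford family $(\tilde Y\ra\tilde C')\otimes\C$, so by \cite[Theorem 0.5]{Zuo} and the isogeny-stable definition of Mumford curves its image in $\cA_{2^m,1,n}\otimes\C$ is again a special Mumford curve. Reducing modulo $p$, the image $C''$ of $C\ra\cA_{2^m,1,n}$ induced by $X/C$ lies inside the reduction of this special curve. Because the reduction may break into several components and the map $C\ra C'$ need not be an isomorphism, I can only assert that $C''$ is one of these irreducible components, which is exactly the asserted conclusion.

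The hard part will be the middle step: ensuring the lifted filtration $\tilde\omega_X$ is a locally free direct summand giving a genuine abelian scheme and reducing back to $\omega_X$. The nontrivial cokernel of $\psi$ means $\psi_{\tilde C'}(\tilde\omega_Y)$ need not be saturated, and at the finitely many points where $\psi$ drops rank the reduction of its saturation can fail to match $\omega_X$; this is precisely the source of the ``irreducible component'' weakening. Here hypothesis (1)---ordinariness at $c$, hence generic ordinariness of the rank-$2$ factor---is what I would use to control $K$: generically $Y[p^\infty]$ splits into multiplicative and \'etale parts, so $K$ is generically of a shape whose lift is unobstructed, and the bad locus is disjoint from a dense open on which $X$ and $\tilde Y/\tilde K$ induce the same map to $\cA_{2^m,1,n}$. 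The last point to verify is the descent of the isogeny from the cover of $C$ down to $C'$, which is what is needed to realise $C''$ inside the reduction of the constructed special curve.
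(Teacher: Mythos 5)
Your overall skeleton (lift $Y$ via Remark \ref{the variation}, lift the isogeny, descend the Mumford property across the isogeny in characteristic zero, accept an ``irreducible component'' loss on reduction) matches the paper's, but your mechanism for the central step --- lifting the isogeny --- is different from the paper's and, as written, has a genuine gap. You propose to apply Grothendieck--Messing to the saturation of $\psi_{\tilde C'}(\tilde\omega_Y)$ inside $\DD(X)_{\tilde C'}$. For this to produce a lift of $X[p^\infty]$ (rather than of some other BT group isogenous to it), the candidate filtration must be a locally free direct summand whose reduction mod $p$ is exactly $\omega_X\subset\DD(X)_C$; you yourself concede that at the points where $\psi$ drops rank this can fail, and you do not repair it. The failure is not absorbed by the ``irreducible component'' weakening: that weakening in the paper comes from passing to a finite cover $\tilde C'\ra\tilde C$ and from $\mathrm{pr}_1(\tilde C')$ possibly having reducible special fibre, not from the lift of $X$ only agreeing with $X$ on a dense open. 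If your $\tilde X$ only matches $X$ generically, you have not shown that the image of $C\ra\cA_{2^m,1,n}$ induced by the actual family $X/C$ sits inside the reduction of the constructed special curve. A second unaddressed point is where the lift lives: the isogeny is defined over $C$, while $\tilde Y$ lives over a lift of $C'=\im\phi$ (and, in the paper's construction, ultimately over a finite cover of it); your closing sentence acknowledges the descent problem but gives no argument.

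The paper avoids both issues by lifting the \emph{kernel} rather than the Hodge filtration. It introduces the isogeny scheme $\mathrm{Isog}_{2^m}$ with its two projections to $\cA_{2^m,1}$ and $\cA_{2^m,d}$, and proves (Proposition \ref{finite and flat}, extended by Remark \ref{general degree}) that these projections are finite and \emph{flat} over the ordinary locus; the flatness is exactly your heuristic ``over the ordinary locus $K$ splits as $K^{\mathrm{mult}}\times K^{\mathrm{et}}$, each factor lifts canonically, then apply Serre--Tate'' made precise. By hypothesis (1) the generic point of $C$ is ordinary, so the point $\xi\in\mathrm{Isog}^o_{2^m}$ given by $Y_\eta\ra X_\eta$ has $\mathrm{pr}_2$ flat at it; openness of flatness yields a one-dimensional $\Spec R\subset\mathrm{Isog}_{2^m}$ dominating $\Spec\cO_{\tilde C,\eta}$, the generic kernel lifts to $\Spec R$, and taking the normalization $\tilde C'$ of $\tilde C$ in $\mathrm{Frac}(R)$ and the scheme-theoretic closure of the lifted kernel produces the quotient $\tilde Y'\ra\tilde Y'/(\tilde K_\eta)^-$ over all of $\tilde C'$. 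This is what globalizes the generic lift and what makes the final Hecke-correspondence step legitimate. If you want to salvage your crystalline route you would need to prove that your saturated filtration reduces to $\omega_X$ everywhere --- which amounts to reproving the flatness statement in different language --- so you should either supply that argument or switch to the isogeny-scheme formulation.
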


The proof of \ref{lifting of isogeny} takes the rest of the paper. First let us review the isogeny scheme, referring to \cite{FC}.
\subsection{The isogeny scheme}
Let Isog$_g$ be the moduli stack of isogenies between polarized abelian schemes of relative dimension $g$, so that for an arbitrary base $S$, Isog$_g(S)$ is the category in groupoids whose objects are the isogenies $\phi: A_1 \ra A_2$ over $S$ between polarized abelian schemes $(A_1 \ra S, \l_1)$ and $(A_2 \ra S, \l_2)$ of relative dimension $g$ such that $\phi^* \l_2 = p^e. \l_1$ for some $e\in \mathbb{N}$. Here we do not require $A_i$ to be principally polarized.

Assigning $\phi$ to its source(resp. target) defines a morphism pr$_1:$Isog$_g\ra\cA_{g,d}$(resp. pr$_2:$Isog$_g \ra \cA_{g,d'}$). Bounding the degree of the isogeny gives a substack of Isog$_g$. We write Isog$(p^l)$ for the stack of $p-$isogenies of degree less than or equal to $p^l$ which is of finite type over $\cA_{g,d}$.

As a variant, we can take level structure into account. Choose an integer $n>3$ and consider $\cA_{g,d,n}$. Making isogenies compatible with level structures, we obtain a scheme Isog$(p^l)$ over $\cA_{g,d,n} \times \cA_{g,d',n}$. To keep notations easy, we omit the subscripts $n$.

Let Isog$^o(p^l)_g$ (resp. $\cA^o_{g,d}$) denote the ordinary locus of Isog$_g(p^l)$ (resp. $\cA_{g,d}$) over any base. 

We need the following result to prove \ref{lifting of isogeny}. To ease the notations, we do not write the bound of the degree $p^l$. 
\begin{prop} \label{finite and flat}
The projections pr$_i:\text{Isog}^o_g\ra \cA^o_{g,1}$ are finite and flat.
\end{prop}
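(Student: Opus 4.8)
The plan is to prove both statements by showing that $\mathrm{pr}_1$ is finite with geometric fibers of locally constant length over the ordinary locus, and then to deduce the case of $\mathrm{pr}_2$ by duality. First I would translate everything into the language of kernels. A point of $\mathrm{Isog}^o_g$ over a base $S$ is an isogeny $\phi\colon A_1\to A_2$ of ordinary principally polarized abelian schemes with $\phi^*\lambda_2=p^e\lambda_1$ and $\deg\phi\le p^l$; this is the same datum as its kernel $K=\ker\phi\subset A_1[p^l]$, a finite flat subgroup scheme, subject to $K$ being isotropic for the Weil pairing attached to $\lambda_1$ (the condition ensuring that $\lambda_1$ descends to a principal polarization on $A_1/K$, so that the target indeed lies in $\cA^o_{g,1}$). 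Thus the fiber of $\mathrm{pr}_1$ over $[A_1]\in\cA^o_{g,1}$ is the finite set of isotropic finite flat subgroup schemes of the finite group scheme $A_1[p^l]$, and in particular $\mathrm{pr}_1$ is quasi-finite.

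Next I would establish properness via the valuative criterion. Given a complete DVR $R$ with fraction field $\kappa$ and an isogeny over $\Spec\kappa$ whose source extends to $A_1$ over $\Spec R$ (it does, since $\cA^o_{g,1}$ is the base), form the scheme-theoretic closure $\bar K$ of the generic kernel inside $A_1[p^l]$. Over the ordinary locus $\bar K$ is finite flat over $R$, the isotropy condition is closed hence preserved, and the quotient $A_1/\bar K$ is again an ordinary principally polarized abelian scheme, so the isogeny extends uniquely. Combined with quasi-finiteness, this shows $\mathrm{pr}_1$ is finite.

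For flatness I would exploit the ordinary hypothesis to make the fiber length locally constant. Over $\cA^o_{g,1}$ the connected-etale sequence of the universal $p$-divisible group splits geometrically, so $A_1[p^l]$ has constant geometric isomorphism type $\m_{p^l}^{\times g}\oplus(\Z/p^l\Z)^{\times g}$; the set of isotropic subgroup schemes of bounded order, counted with the lengths contributed by their possibly non-reduced multiplicative parts, then depends only on this type and is constant on each connected component. A finite morphism whose geometric fibers have locally constant length over a reduced (here smooth) base is finite locally free, whence $\mathrm{pr}_1$ is finite and flat. In fact, by Serre--Tate theory (\ref{Serre-Tate}) the kernel deforms uniquely with $A_1$ over the ordinary locus, so $\mathrm{pr}_1$ should even be formally etale, hence finite etale, but we only need flatness here.

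Finally, for $\mathrm{pr}_2$ I would use duality: using the principal polarizations to identify each $A_i$ with its dual, the dual isogeny $\phi^\vee\colon A_2\to A_1$ is again a $p$-power isogeny between ordinary principally polarized abelian schemes, and $\phi\mapsto\phi^\vee$ is an involution of $\mathrm{Isog}^o_g$ interchanging $\mathrm{pr}_1$ and $\mathrm{pr}_2$; thus $\mathrm{pr}_2$ is finite and flat as well. The step I expect to be the main obstacle is the flatness argument: one must genuinely control the length of the fibers across the family, which requires understanding how the infinitesimal multiplicative parts (such as $\m_{p^i}$) of the isotropic kernels behave in families and verifying that the principal-polarizability condition cuts out an open-and-closed locus, so that the target stays in $\cA^o_{g,1}$ rather than drifting into some $\cA_{g,d'}$ with $d'\neq 1$.
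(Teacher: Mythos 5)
Your overall strategy is sound and, at the crucial flatness step, genuinely different from the paper's. The paper quotes properness of $\mathrm{Isog}_g\to\cA_{g,1}$ and obtains quasi-finiteness exactly as you do (finitely many subgroup schemes of $\m_{p^l}^{\times g}\times(\Z/p^l\Z)^{\times g}$), but for flatness it argues infinitesimally: over the ordinary locus the connected--\'etale sequence splits over a complete local base, so the kernel --- and hence, by Serre--Tate theory (\ref{Serre-Tate}) and lifting of the principal polarization, the whole isogeny --- lifts along any deformation of the abelian variety; this shows the map on complete local rings does not collapse onto the special fiber, and flatness is deduced from a zero-divisor/height argument. You instead identify the scheme-theoretic fiber of $\mathrm{pr}_1$ over a geometric point $[A]$ with the finite scheme of isotropic finite flat subgroup schemes of $(A[p^l],e_\lambda)$, note that this polarized group scheme has a single geometric isomorphism class on the ordinary locus, and invoke the criterion that a finite morphism with locally constant fiber length over a reduced base is finite locally free. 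Both routes use ordinariness in an essential and parallel way; yours makes the degree of $\mathrm{pr}_1$ visibly constant, while the paper's deformation-theoretic argument transfers painlessly to non-principally polarized targets (Remark \ref{general degree}), which is what is actually needed later. Your properness and duality steps are standard and fine (modulo the bookkeeping that the dual isogeny $p^e\phi^{-1}$ changes the degree bound $p^l$).

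Two slips should be corrected, neither fatal. First, the parenthetical claim that $\mathrm{pr}_1$ is finite \emph{\'etale} is false: already for $g=1$ the component of $\mathrm{Isog}^o$ whose kernels are \'etale maps to $\cA^o_{1,1}$ through Frobenius (the Deligne--Rapoport picture of $X_0(p)$ in characteristic $p$), which is flat but totally inseparable. The kernel does \emph{not} deform uniquely with $A$: a subgroup such as $\Z/p\subset\m_p\times\Z/p$ deforms over an Artinian $R$ to any graph in $\Hom(\Z/p,\m_p)(R)=\m_p(R)$, and $\m_p(k[\e])\cong k$ is positive-dimensional. Second, and relatedly, your accounting of the fiber length is off: the multiplicities are not ``contributed by the non-reduced multiplicative parts'' of the kernels, but by precisely these infinitesimal graph deformations, i.e.\ by the non-reducedness of the moduli scheme of subgroups at its points (in the $g=1$ example the fat point of length $p$ sits over the kernel $\Z/p$, not over $\m_p$). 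What you actually need --- and what is true --- is only that the fiber, as a scheme, is a functor of $(A[p^l],e_\lambda)$ and that this datum is geometrically constant on $\cA^o_{g,1}$; the justification should be phrased that way rather than by counting orders of the kernels themselves.
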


\begin{proof}
This proposition is essentially similar to the Proposition 4.1(i) in the Chapter VII of \cite{FC}.  Here we prove it directly.

It is known that the morphism $\mbox{Isog}_g \ra \cA_{g,1}$ is proper. Let $Y$ be any ordinary abelian variety over any algebraically closed field.  Note the degree of the isogeny is bounded and there is only finitely many subgroup in $Y[p^n]\cong Y^{\et}[p^n] \times Y^{\text{mult}}[p^n]\cong \m_{p^n}\times \Z/p^n$. Therefore the fiber of projection $\mbox{Isog}_g \ra \cA_{g,1}$ over $X$ is finite which implies that pr$:\text{Isog}^o_g\ra \cA^o_{g,1}$ is quasi-finite. So pr is a finite morphism.

Let $\eta$ (resp. $\xi$) be a closed point in $\cA_{g,1}$(resp. $\mbox{Isog}_g$) with stalk $A_\eta$(resp. $B_\xi$) and assume pr$(\xi)=\eta$. Since both $A_\eta$ and $B_\xi$ are local rings, by local criterion of flatness, it suffices to show the image of $A_\eta$ in $B_\xi$ contains no zerodivisor. Then we can assume $A_\eta$ and $B_\xi$ are complete with respect to the maximal ideals. 

Let $t$ be any element in the image of $A_\eta$ and $\frak{q}$ be a minimal point in the zero set $V(t)\subset \Spec B_\xi$. Then $\mbox{ht}\frak{q} \leq 1$. Let $\frak{p}=\frak{q}\cap A_\eta$.  

On the one hand, $t$ is a zerodivisor if and only if $\mbox{ht\,}\frak{q}=0$. 

On the other hand, for the principally polarized abelian variety $X$ defined over $\Spec (A_\eta)_ \frak{p}$ and an isogeny $g: X\otimes k \ra Y$ over the special fiber of $\Spec (A_\eta)_ \frak{p}$. Let $K = \ker g$ and thus $K\subset X_k[p^e]\cong X_k[p^e]^{\mult} \times X_k[p^e]^{\et}$. Then correspondingly, $K=K^\mult \times K^\et$. Since $A_\eta$ is complete, the splitting $X_k[p^e]\cong X_k[p^e]^{\mult} \times X_k[p^e]^{\et}$ can be extended to $\Spec (A_\eta)_ \frak{p}$. Then $K^\mult$ and $K^\et$ can also be lifted to $\Spec (A_\eta)_ \frak{p}$. Thereby we have a deformation of the BT group $Y[p^\infty]$. By Serre-Tate theory( \ref{Serre-Tate}), the abelian variety $Y$ can be lifted to a formal abelian scheme over $\mathrm{Spf} (A_\eta)_ \frak{p}$.  Since $Y$ is principally polarized, any polarization on $Y$ can be lifted due to the uniqueness of the lifting of $Y$. Thus the isogeny $\p$ can be lifted.

Therefore the image of $\Spec (B_\xi)_\frak{q} \ra \Spec (A_\eta)_\frak{p}$ is not just the special fiber, i.e. $\mbox{ht}(\frak{q})>0$. So pr is flat over the point $Y$.
\end{proof}

\begin{rmk} \label{general degree}
In the proof of \ref{finite and flat}, the principal polarization is only used in the last step, i.e. to lift the polarization. Therefore, if $X$ is principally polarized and the polarization $\l_Y$ satisfies
\[\xymatrix{
Y \ar[r] \ar[d]^{\l_Y} & X \ar[d]^{\l_X} \\
Y^t  & X^t \ar[l]
}\] is commutative, then the proof still works.  So we have $\mathrm{Isog}^o_g \ra \cA^o_{g,\deg \l_Y}$ is also flat at the point corresponding to $Y$. 
\end{rmk}

\subsection{Proof of \ref{lifting of isogeny}}
In our case, we mainly focus on the Isog$_{2^m}$ associated to $\cA_{{2^m},1}$ and $\cA_{{2^m},d}$ where $d=2|K|$. 
\[\xymatrix{
&\text{Isog}_{2^m} \ar[dr]^{pr_2} \ar[dl]_{pr_1} &\\
\cA_{{2^m},1}&&\cA_{{2^m},d}
}
\]

Let us denote the lifting of the image of $C \ra \cA_{2^m,d,n}$ as $\tilde C$ and $\cO_{\tilde C, \eta}$ be the local ring of $\tilde C$ at $\eta$. The isogeny $Y_\eta \ra X_\eta$ gives a point $\xi \in $Isog$^o_{2^m}$ such that pr$_2(\xi)=\eta$. By \ref{general degree}, pr$_2$ is flat over $\xi$. And from the openness of the flatness, there exists a subscheme $\Spec R \subset $Isog$_{2^m}$ of dimension 1 with special fiber $\xi$ such that pr$_2(\Spec R)=\Spec \cO_{\tilde C, \eta}$. Therefore the generic kernel $K_\eta$ of the isogeny can be lifted to $\Spec R$. Denote the lifting as $\tilde K_\eta$. Let $\tilde C'$ be the normalization of $\tilde C$ in the fractional field of $R$. Since $\Spec R \ra \Spec \cO_{\tilde C, \eta}$ is a finite morphism, we have the following commutative diagram: 
\[\xymatrix{
\Spec R \ar[d] \ar[r] & \tilde C' \ar[d]\\
\Spec \cO_{\tilde C, \eta} \ar[r] & \tilde C .\\
}\]
And the closure of $\Spec R$ in $\tilde C'$ is the whole $\tilde C'$. Then take the closure of $\tilde K_\eta$ over $\tilde C'$ and hence the quotient 
\[\tilde Y' \ra \tilde Y'/(\tilde K_\eta)^-=: \tilde X' \] 
lifts the isogeny $Y \ra X$ to $\tilde C'$ over $W(k)$. For simplicity, let us still denote the image of $\tilde C'$ in $\mathrm{Isog}_{2^m}$ as $\tilde C'$.

By \ref{the variation},  the family $\tilde Y \ra \tilde C$ over $W(k)$ whose generic fiber is a special Mumford curve in $\cA_{2^m, d, n}\otimes \C$. 
Note $\mathrm{pr}_2(\tilde C') =\tilde C$ and $\mathrm{pr}_1(\tilde C')$ is connected to $\tilde C$ via Hecke correspondence. 
Then $\mathrm{pr}_1(\tilde C')$ also has a special Mumford curve as the generic fiber. Note $\tilde X' \ra \mathrm{pr}_1(\tilde C')$ is a lifting of the image of $C \ra \cA_{2^m, 1, n}$ induced by $X$. Since the reduction of $\tilde C'$ may be reducible, we can at best have that the image of $C \ra \cA_{{2^m},1,n}$ is an irreducible component of the reduction. So $X \ra C$ is a weak Shimura curve over $k$.

Now we complete the proof of \ref{main thm}.

\begin{appendix}

\section{crystal model} \label{crystal model}

Assume we have a $F$-isocrystal $\cV$ over $C$. Then it is in prior a crystal over $C$. Locally, it corresponds to a module with connection $(M. \nabla)$ over $\tilde C$ such that $M \otimes B(k)$ admits a $\s-$linear morphism $F$. The point is to descend the Frobenius $F$.

Though $F$ may not descend to $M$, we can consider $M'=\sum_n F^{(n)}(M)$. Then $M \subset M' \subset M\otimes B(k_0)$. One can mimic the the proof of ( \cite{Katz}, Theorem 2.6.1) to show $M'$ is finitely generated provided that all slopes are nonnegative. 

Since $\tilde C$ is regular of dimension 2, taking the double dual $M'^{\vee\vee}$ gives a locally free sheaf over $\tilde C$. 

If $\cV$ is an isocrystal with slopes between 0 and 1, then we can further choose a morphism $V: \cV\ra \cV^\s$ such that $V\circ F= F\circ V =p.$ Hence such isocrystal has a model of Dieudonne crystal.

\end{appendix}

\bibliographystyle{amsplain}
\bibliography{mybib}{}

\end{document}